\documentclass[preprint,12pt]{elsarticle}

\usepackage{amssymb}
\usepackage{amsmath}
\usepackage{subcaption}
\usepackage{multirow}

\newproof{proof}{Proof}
\newtheorem{theorem}{Theorem}
\newdefinition{remark}{Remark}
\journal{}
\makeatletter
\def\ps@pprintTitle{%
	\let\@oddhead\@empty
	\let\@evenhead\@empty
	\def\@oddfoot{}%
	\let\@evenfoot\@oddfoot}
\makeatother
\begin{document}
	
	\begin{frontmatter}
		
		
		
		\title{PCELM: Perturbation-Correction Extreme Learning Machine for the Stefan problem}
		
		\cortext[cor1]{Corresponding author.}
		
		\author[a]{Wenjie Liu}

		\author[a]{Siyuan Lang\corref{cor1}}
		\ead{2679040392@qq.com} 
		
		\author[a]{Zhiyue Zhang\corref{cor1}}
		\ead{zhangzhiyue@njnu.edu.cn} 
		\affiliation[a]{
			organization={ Ministry of Education Key Laboratory of NSLSCS, School of Mathematical Sciences}, 
			addressline={Nanjing Normal University}, 
			city={Nanjing},
			postcode={210023}, 
			state={Jiangsu Province},
			country={China}
		}
		
		\begin{abstract}
			For Stefan problems, characterized by moving boundaries and discontinuous coefficients due to phase changes, the inherent nonconvexity of the objective functional frequently causes optimization difficulty in randomized neural network approximations; to address this, we propose a Perturbation-Correction Extreme Learning Machine (PCELM) framework, built upon the extreme learning machine framework. This method first establishes a basic approximation during an initialization step by minimizing the original nonconvex residual, typically achieving only moderate accuracy, and then, in a subsequent correction step, determines a correction term by solving a subproblem based on a perturbation expansion around this basic approximation, thereby transforming it into a convex optimization problem for the output coefficients that ensures rapid convergence. We further provide a rigorous a convexity analysis, demonstrating that PCELM method solves a convex sub-problem. Numerical experiments on various Stefan problems, including multi-phase and multi-dimensional Stefan problems, confirm that the proposed PCELM method successfully overcomes optimization plateaus, with the correction step consistently delivering a significant improvement of 2-6 orders of magnitude in the relative $L^2$ accuracy.
		\end{abstract}
		
		\begin{keyword}
			Stefan problem, Extreme Learning Machine, perturbation correction, convexity analysis
			\MSC[2020] 35R35 \sep 65M12 \sep 65M70 \sep 65K10
		\end{keyword}
		
	\end{frontmatter}
	
	\section{Introduction}
	\par The Stefan problem is widely regarded as a prototypical mathematical model for phase-transition systems with moving boundaries, such as melting and solidification processes \cite{1,2,3,4,L,E}. It involves a time-evolving free boundary that partitions the computational domain into two distinct phases (e.g., solid and liquid). The dynamic nature of the interface, coupled with its interaction with the phase fields, poses significant challenges for numerical solvers. To date, both mesh-based and mesh-free methods have demonstrated distinct advantages in tackling the Stefan problem.
	
	\par Conventional mesh-based numerical methods for the Stefan problem may generally be grouped according to whether the moving interface is treated in an explicit or an implicit manner. In front-tracking approaches \cite{5,6,7,8,9}, the free boundary is directly followed by means of moving grids or Lagrangian markers; however, such techniques may become less effective when the interface undergoes complicated topological changes. By comparison, fixed-grid schemes \cite{10,11} avoid explicit interface tracking and instead capture the boundary motion on a stationary mesh. Within this category, the enthalpy method \cite{12,13,14} rewrites the governing model through an enthalpy-based formulation in which latent heat is absorbed into the source contribution. Another widely used strategy is the level-set method \cite{15,16,17}, where the interface is characterized by the zero level of an auxiliary higher-dimensional function. In addition, phase-field methods \cite{18,19,20,21} replace the sharp interface with a thin diffuse transition region.
	
	\par In recent years, mesh-free approaches have attracted increasing interest, largely motivated by the Universal Approximation Theorem \cite{22}. One notable example is the Physics-Informed Neural Network (PINN) paradigm \cite{23}, in which approximate solutions are constructed under the constraints of the governing equations together with the prescribed initial and boundary data. Relative to conventional analytical and numerical methods, PINN-based mesh-free techniques offer the potential to reduce computational expense for both forward and inverse problems. For the Stefan problem, a number of PINN-related studies \cite{24,25,26} have introduced the moving interface as an additional unknown so that it can be determined simultaneously with the phase variable. Moreover, the combination of level-set formulations and neural networks \cite{LF,MH} has also been explored as an effective strategy for free-boundary problems. A different research direction casts the partial differential equations (PDEs) into a least-squares formulation and computes the solution by means of the Random Feature Method (RFM) \cite{27,28,SZ2} or the Extreme Learning Machine (ELM) \cite{HG}. Based on this idea, Physics-Informed ELM (PIELM) approaches \cite{VB,SZ} have been introduced, where the parameters in the hidden layer are kept fixed and only the output-layer coefficients are trained. More recently, PIELM methods \cite{FP, CT, WY, AZ, SZ1} have been extended to capture evolving interfaces, demonstrating improved accuracy over standard PINN-based models. However, the availability of highly accurate, machine-learning-based solvers for the severe nonlinearities inherent in the Stefan problem remains limited, primarily due to the non-convex nature of the associated residual minimization. Developing efficient and robust optimization solvers in this context is therefore of paramount importance.
	
	\par In this work, we propose a perturbation-correction method utilizing the ELM framework for the efficient and robust solution of the Stefan problem. In the first stage, the original non-convex PDE residual minimization is solved via ELM, yielding a convergent base approximation that captures the primary dynamics of the interface and the temperature field. Subsequently, in the correction stage, we apply a perturbation expansion around the numerical solution obtained in the first step. This reformulates the correction step into a mathematically convex optimization problem with respect to the output coefficients. In both stages, the resulting optimization problems are efficiently minimized using Gauss--Newton iterations, ensuring rapid convergence and high precision. Furthermore, we provide a rigorous convexity analysis for the perturbation-correction stage. Finally, to demonstrate the effectiveness and robustness of our approach, we present numerical tests on a suite of representative benchmarks, including 1D one-phase and two-phase free boundary problems, 2D one-phase free boundary problem, as well as 2D and 3D Frank-sphere problems.
	
	The main contributions are as follows:
	
	\begin{enumerate}
		\item We develop a novel perturbation correction framework for the Stefan problem, which incorporates a free-boundary perturbation expansion step to effectively resolve optimization stagnation in the non-convex residual minimization process.
		
		\item A comprehensive convexity analysis for the correction step, proving that it formulates a convex sub-problem capable of systematically eliminating leading-order initialization errors.
		
		\item Extensive numerical experiments across challenging scenarios---including multi-dimensional, multi-phase Stefan problems, and high-dimensional Frank-sphere problems---demonstrating consistent error reduction between the two steps. The proposed method achieves at least a two-order-of-magnitude improvement in relative $L^2$ accuracy and exhibits stable performance across all tested configurations.
	\end{enumerate}
	
	\par The rest of this paper is arranged as follows. Section 2 presents the mathematical description of the Stefan problem. In Section 3, we introduce the ELM framework and develop the proposed perturbation-correction approach. Section 4 is concerned with the related convex analysis. Section 5 contains several numerical examples to demonstrate the performance of the method. Finally, Section 6 summarizes the main conclusions of this work.
	\section{Mathematical model}
	Let $\Omega \subset \mathbb{R}^d$ ($d \in \{1,2,3\}$) be a bounded domain partitioned into two subdomains, $\Omega_{1}(t)$ and $\Omega_{2}(t)$, separated by a moving interface $R(t)$. The two subdomains and the interface depend on time $t$ and can evolve. We denote $\partial \Omega_{q}(t)$ as the boundary of $\Omega_{q}(t)$ for $q=1, 2$. Let $R_{1}(t)=\partial \Omega_{1}(t)\backslash R(t)$ and $R_{2}(t)=\partial \Omega_{2}(t)\backslash R(t)$ represent the fixed boundaries. Denote $u(\mathbf{x}, t)$ as the scalar temperature field in the whole bounded domain $\Omega$, where $u_{q}(\boldsymbol{x}, t)$ is the temperature distribution in each subdomain $\Omega_{q}(t)$. The two-phase Stefan problem, which governs the evolution of the solid and liquid phases, is formulated as follows
	\begin{equation}
		\label{1}
		\frac{\partial u_{q}}{\partial t}(\mathbf{x}, t)-k_{q}\Delta u_{q}(\mathbf{x}, t)=0, \mathbf{x}\in \Omega_{q}(t), t\in (0, T], q=1,2,\\
	\end{equation}
	where $k_{1}$ and $k_{2}$ are the given parameters. The initial and boundary conditions are defined for the Stefan problem
	\begin{equation}\label{2}
		\begin{split}
			&u_{q}(\mathbf{x},0)=g_{q}(\mathbf{x}), \mathbf{x}\in \Omega_{q}(0), q=1,2,\\
			&u_{q}(\mathbf{x},t)=h_{q}(\mathbf{x},t), \mathbf{x}\in R_{q}(t), t\in (0,T], q=1,2.
		\end{split}
	\end{equation}
	The initial and boundary conditions where the free boundary also satisfies
	\begin{equation}\label{3}
		\begin{split}
			&R(0)=R_{0},\\
			&u_{1}(\mathbf{x},t)=u_{2}(\mathbf{x},t)=0, \mathbf{x}\in R(t), t\in (0,T],\\
			&k_{1}\frac{\partial u_{1}}{\partial \mathbf{n}}-k_{2}\frac{\partial u_{2}}{\partial \mathbf{n}}=h(\mathbf{x}, t), \mathbf{x}\in R(t), t\in (0, T],
		\end{split}
	\end{equation}
	where $\mathbf{n}$ signifies the unit normal vector of $R(t)$. Figure \ref{fig:11} illustrates two distinct evolution modes of the free boundary in the two-phase Stefan problem.
	\begin{figure} 
		\centering
		\begin{subfigure}[b]{0.45\textwidth}
			\centering
			\includegraphics[width=\textwidth]{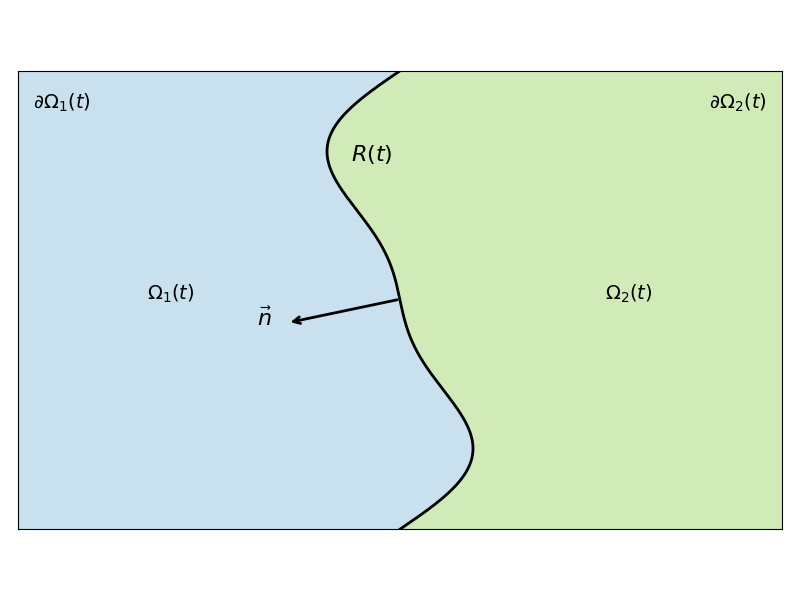} 
			\label{fig:sub11}
		\end{subfigure}
		\hfill
		\begin{subfigure}[b]{0.45\textwidth}
			\centering
			\includegraphics[width=\textwidth]{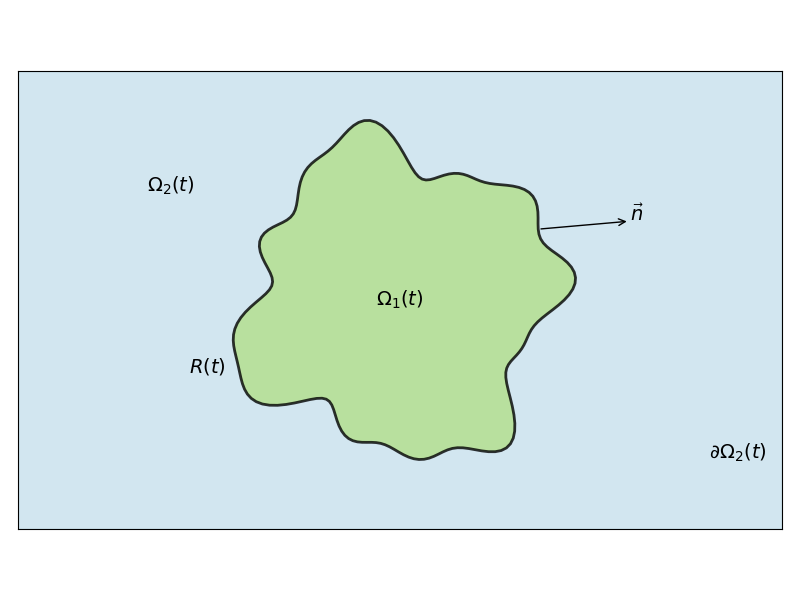} 
			\caption{}
			\label{fig:sub12}
		\end{subfigure}
		\caption{Schematic of the two-phase Stefan problem illustrating the evolution of the free boundary and two-phase distributions.}
		\label{fig:11}
	\end{figure}
	\section{PCELM framework for the Stefan problem}
	\label{sec2}
	Before employing PCELM method to solve the Stefan problem, we first need to understand the ELM framework. 
	\subsection{Extreme learning machine}
	The extreme learning machine, as a random feature method, has excellent function approximation properties. We consider a general form of partial
	differential equations
	\begin{equation*}
		\begin{split}
			&\frac{\partial u}{\partial t}(\mathbf{x}, t)-\mathcal{L}[u(\mathbf{x}, t)]=0, (\mathbf{x}, t)\in \Omega\times(0,T],\\
			&\mathcal{IC}[u(\mathbf{x}, 0)]=g(\mathbf{x}),x\in \Omega,\\
			&\mathcal{BC}[u(\mathbf{x}, t)]=h(\mathbf{x}, t), x\in \partial \Omega, t\in (0,T],
		\end{split}
	\end{equation*}
	where $\mathbf{x}\in \mathbb{R}^d$; $\mathcal{L}[\cdot]$ is a nonlinear differential operator; $\mathcal{IC}[\cdot]$ and $\mathcal{BC}[\cdot]$ denote the initial condition and the condition operator separately. In the ELM framework, the solution can be parameterized by a single-hidden-layer network. A schematic of the neural network architecture is presented in Figure \ref{fig:1111}.
	\begin{figure}[htbp] 
		\centering
		\includegraphics[width=\textwidth]{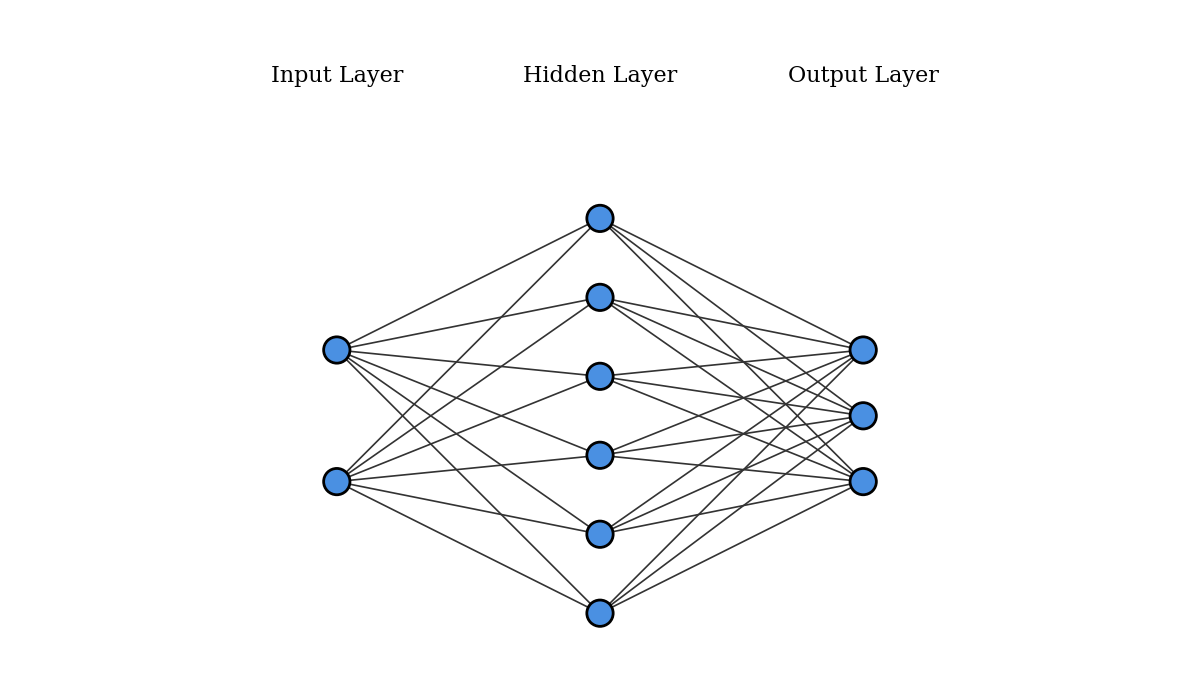}
		\caption{Structure of the extreme learning machine.}
		\label{fig:1111} 
	\end{figure}
	\par We denote $u^{M}(\mathbf{x},t)$ as the approximate solution, constructed by $M$ linear basis functions
	\begin{equation*}
		u^{M}(\mathbf{x},t;\boldsymbol{\alpha})=\sum_{j=1}^{M}{\alpha_{j}}\tau\left(\boldsymbol{\omega_{j}}\cdot\mathbf{x}+\boldsymbol{\omega_{j}}\cdot t+b_{j}\right)=:\boldsymbol{\tau}(\mathbf{x},t)\boldsymbol{\boldsymbol{\alpha}},
	\end{equation*}
	where $\boldsymbol{\alpha}=\left[\alpha_{1},\cdots,\alpha_{M}\right]$ denotes the unknown coefficients, while $\boldsymbol{\omega_{j}} \in \mathbb{R}^d$ and $b_{j}\in \mathbb{R}$ are generated by a probability distribution and remain fixed in the computation. Furthermore, $\tau_{j}(\mathbf{x}, t)=\tau (\boldsymbol{\omega_{j}}\cdot\mathbf{x}+\boldsymbol{\omega_{j}}\cdot t+b_{j})$ indicates $\boldsymbol{\tau}(\mathbf{x},t)=\left[\tau_{1}(\mathbf{x}, t),\cdots,\tau_{M}(\mathbf{x}, t)\right]$. 
	\par The loss function is set for measuring the distance between the output $u(\mathbf{x}, t; \boldsymbol{\alpha})$ and the reference solution. We optimize the parameter $\boldsymbol{\alpha}$ by minimizing the loss function which follows
	\begin{equation*}
		Loss(\boldsymbol{\alpha})=\beta_{e}Loss_{e}(\boldsymbol{\alpha})+\beta_{ic}Loss_{ic}(\boldsymbol{\alpha})+\beta_{bc}Loss_{bc}(\boldsymbol{\alpha}),
	\end{equation*}
	with
	\begin{equation*}
		\begin{split}
			&Loss_{e}(\boldsymbol{\alpha})=\frac{1}{N_{e}}\sum_{i=1}^{N_{e}}\left|\frac{\partial u}{\partial t}(\mathbf{x}_{i}, t_{i}; \boldsymbol{\alpha})-\mathcal{L}[u(\mathbf{x}_{i}, t_{i}; \boldsymbol{\alpha})] \right|^{2},\\
			&Loss_{ic}(\boldsymbol{\alpha})=\frac{1}{N_{ic}}\sum_{k=1}^{N_{ic}}\left|\mathcal{IC}[u(\mathbf{x_{k}}, 0;\boldsymbol{\alpha})]-g(\mathbf{x_{k}};\boldsymbol{\alpha})\right|^{2},\\
			&Loss_{bc}(\boldsymbol{\alpha})=\frac{1}{N_{bc}}\sum_{l=1}^{N_{bc}}\left|\mathcal{BC}[u(\mathbf{x_{l}}, 0;\boldsymbol{\alpha})]-h(\mathbf{x_{l}}; \boldsymbol{\alpha})\right|^{2},
		\end{split}
	\end{equation*}
	where $\beta_{e}$, $\beta_{ic}$ and $\beta_{bc}$ denote the weight coefficients; $N_{e}$, $N_{ic}$ and $N_{bc}$ are the points for the equation, the initial condition and the boundary condition, respectively. $Loss_{e}$, $Loss_{ic}$ and $Loss_{bc}$ measure the equation residual with randomly sampled points $(\mathbf{x}_{i}, t_{i})^{N_{e}}_{i=1}$ on the bounded domain $\Omega\times (0, T]$, the initial residual with $(\mathbf{x}_{k}, t_{k})^{N_{ic}}_{k=1}$ on $\Omega$ and the boundary residual with $(\mathbf{x_{l}}, t_{l})^{N_{bc}}_{l=1}$ on $\partial \Omega\times (0\times T]$, respectively. Activation function has many choices, such as tanh, ReLU, etc.  The output parameter $\boldsymbol{\alpha}$ incorporated into the loss functions can be trained by using Adam or SGD methods to realize the minimization of the total loss.
	\subsection{ELM method for the Stefan problem}
	\label{s22}
	In this subsection, we present the ELM framework for solving the two-phase Stefan problem. Unlike standard partial differential equations (PDEs), the Stefan problem involves an unknown evolving interface. Therefore, we let 
	\begin{equation}
		u^{M}_{q}(\mathbf{x}, t; \boldsymbol{\alpha}_{q})=\sum_{i=1}^{M}\alpha_{i,q}\tau^{u} \left(\boldsymbol{\omega}_{i}\cdot \boldsymbol{x}+\boldsymbol{\omega}_{i}\cdot t+ b_{i,q} \right)=:\boldsymbol{\tau^{u}}(\mathbf{x},t)\boldsymbol{\alpha}_{q},
	\end{equation} 
	which denotes the approximation of the $q$-th phase field $u_{q}(\mathbf{x}, t)$ for $q \in \{1, 2\}$, and we approximate the exact free boundary $R(t)$ using a parameterized function 
	\begin{equation}
		R^{H}(t; \boldsymbol{\gamma})=\sum_{j=1}^{H}\gamma_{j} \tau^{R}\left(\boldsymbol{\omega}_{j}\cdot t +b_{j} \right)=:\boldsymbol{\tau^{R}}(t)\boldsymbol{\gamma}.
	\end{equation} 
	\par Here, $\boldsymbol{\alpha}_{q}$ and $\boldsymbol{\gamma}$ represent independent trainable parameters. We select $\sin$ as the activation function because it belongs to a Fourier-type function space capable of approximating smooth functions. The parameters $\boldsymbol{\alpha}_{q}$ and $\boldsymbol{\gamma}$ are optimized by minimizing the total loss function derived from Eqs. (\ref{1})-(\ref{3})
	\begin{equation}
		\begin{split}
			&Loss\left(\boldsymbol{\alpha}_{1},\boldsymbol{\alpha}_{2},\boldsymbol{\gamma}\right)\\
			&=\sum_{q=1}^{2}\Big[\beta_{e, q}Loss_{e, q}(\boldsymbol{\alpha}_{q})+\beta_{ic, q}Loss_{ic, q}(\boldsymbol{\alpha}_{q})+\beta_{bc, q}Loss_{bc, q}(\boldsymbol{\alpha}_{q})\\
			&+\beta_{bs, q}Loss_{bs, q}(\boldsymbol{\alpha}_{q}, \boldsymbol{\gamma}) \Big]+\beta_{fb}Loss_{fb}(\boldsymbol{\gamma})+\beta_{sc}Loss_{sc}(\boldsymbol{\alpha}_{1},\boldsymbol{\alpha}_{2}, \boldsymbol{\gamma}),
		\end{split}
	\end{equation}
	where $\beta_{e,q}$, $\beta_{ic,q}$, $\beta_{bc,q}$, $\beta_{bs,q}$, $\beta_{fb}$ and $\beta_{sc}$ represent the positive weights for balancing the different loss terms.
	\par To clearly illustrate the framework, the individual loss components are defined sequentially below.
	
	\par For $q=1, 2$, the loss term corresponding to the governing equation residual is given by:
	\begin{equation}
		Loss_{e, q}(\boldsymbol{\alpha}_{q})=\frac{1}{N_{e}}\sum_{i=1}^{N_{e}}\left|\frac{\partial u_{q}}{\partial t}(\mathbf{x}_{i}, t_{i}; \boldsymbol{\alpha}_{q})-k_{q}\Delta u_{q}(\mathbf{x}_{i}, t_{i}; \boldsymbol{\alpha}_{q}) \right|^{2}:=\frac{1}{N_{e}}\sum_{i=1}^{N_{e}}\left|\mathcal{R}_{e}\right|^{2}.
	\end{equation}
	
	The loss term for the initial condition of the phase fields is formulated as:
	\begin{equation}
		Loss_{ic,q}(\boldsymbol{\alpha}_{q})=\frac{1}{N_{ic}}\sum_{k=1}^{N_{ic}}\left|u_{q}(\mathbf{x}_{k}, 0; \boldsymbol{\alpha}_{q})-g_{q}(\mathbf{x}_{k})\right|^{2}:=\frac{1}{N_{ic}}\sum_{k=1}^{N_{ic}}\left|\mathcal{R}_{ic}\right|^{2}.
	\end{equation}
	
	The loss term enforcing the outer boundary condition is defined as:
	\begin{equation}
		Loss_{bc,q}(\boldsymbol{\alpha}_{q})=\frac{1}{N_{bc}}\sum_{l=1}^{N_{bc}}\left|u_{q}(\mathbf{x}_{l}, t_{l};\boldsymbol{\alpha}_{q})-h_{q}(\mathbf{x}_{l},t_{l}) \right|^2:=\frac{1}{N_{bc}}\sum_{l=1}^{N_{bc}}\left|\mathcal{R}_{bc}\right|^{2}.
	\end{equation}
	
	The loss term for the condition evaluated strictly on the moving boundary is:
	\begin{equation}
		Loss_{bs,q}(\boldsymbol{\alpha}_{q}, \boldsymbol{\gamma})=\frac{1}{N_{bs}}\sum_{p=1}^{N_{bs}}\left|u_{q}(R(t_{p}; \boldsymbol{\gamma}), t_{p}; \boldsymbol{\alpha}_{q})\right|^2:=\frac{1}{N_{bs}}\sum_{p=1}^{N_{bs}}\left|\mathcal{R}_{bs}\right|^{2}.
	\end{equation}
	
	Specifically for the moving boundary, the loss term constraining its initial position is given by:
	\begin{equation}
		Loss_{fb}(\boldsymbol{\gamma})=\frac{1}{N_{fb}}\sum_{i=1}^{N_{fb}}\left|R(t_{i}; \boldsymbol{\gamma})-R_{0} \right|^2:=\frac{1}{N_{fb}}\sum_{i=1}^{N_{fb}}\left|\mathcal{R}_{fb}\right|^{2}.
	\end{equation}
	
	Finally, the loss term for the Stefan condition, which couples the two phase fields at the moving boundary, is defined as:
	\begin{equation}
		\begin{split}
			&Loss_{sc}(\boldsymbol{\alpha}_{1},\boldsymbol{\alpha}_{2}, \boldsymbol{\gamma})=\\
			&\frac{1}{N_{sc}}\sum_{j=1}^{N_{sc}}\left|k_{1}\frac{\partial u_{1}}{\partial \mathbf{n}}\left(R(t_{j}; \boldsymbol{\gamma}),t_{j}; \boldsymbol{\alpha}_{1}\right)-k_{2}\frac{\partial u_{2}}{\partial \mathbf{n}}\left(R(t_{j};\boldsymbol{\gamma}),t_{j};\boldsymbol{\alpha}_{2}\right)-h(\mathbf{x}_{j},t_{j})\right|^2\\
			&:=\frac{1}{N_{sc}}\sum_{j=1}^{N_{sc}}\left|\mathcal{R}_{sc}\right|^{2}.
		\end{split}
	\end{equation}
	\par Here, $N_{e}$, $N_{ic}$, $N_{bc}$, $N_{bs}$, $N_{fb}$, and $N_{sc}$ denote the number of collocation points sampled for the governing equations, initial conditions, boundary conditions, moving interface condition, initial status of the free boundary, and the Stefan condition, respectively. $\mathcal R_{e}$, $\mathcal R_{ic}$, $\mathcal R_{bc}$, $\mathcal R_{bs}$, $\mathcal R_{fb}$, and $\mathcal R_{sc}$ represent the corresponding point-wise residuals for each equation.
	\par Considering that the total loss functional takes the form of a discrete least-squares problem, we employ a regularized Gauss-Newton scheme for its minimization
	\begin{equation}
		\min_{\boldsymbol{\eta}} G(\boldsymbol{\eta}) = \min_{\boldsymbol{\eta}} Loss \left(\boldsymbol{\eta}\right), \label{eq:1}
	\end{equation}
	where $\boldsymbol{\eta} = \left(\boldsymbol{\alpha}_{1}, \boldsymbol{\alpha}_{2}, \boldsymbol{\gamma}\right)$ is the concatenated trainable parameter vector. By forming a block column, the global residual vector $\mathcal{T}$ is formulated as
	\begin{equation}
		\mathcal{T} = \begin{pmatrix} 
			\mathcal{R}_{e} \\ 
			\mathcal{R}_{ic} \\ 
			\mathcal{R}_{bc} \\ 
			\mathcal{R}_{bs} \\ 
			\mathcal{R}_{fb} \\
			\mathcal{R}_{sc}
		\end{pmatrix}, 
	\end{equation}
	with $\mathcal{T} \in \mathbb{R}^{N_{total}}$, where $N_{total} = N_{e} + N_{ic} + N_{bc} + N_{bs} + N_{fb} + N_{sc}$.
	
	To solve this nonlinear least-squares problem Eq. (\ref{eq:1}), we implement a regularized Gauss-Newton scheme. At iteration $k$, we approximate the Hessian matrix as $\nabla^2 G^{\{k\}} \approx {\mathcal{J}^{\{k\}}}^T \mathcal{J}^{\{k\}}$, where $\mathcal{J}^{\{k\}} = \partial \mathcal{T}^{\{k\}} / \partial \boldsymbol{\eta}$ is the associated Jacobian matrix, given by
	\begin{equation}
		\mathcal{J} = \begin{pmatrix} 
			\mathcal{J}_{e} & \mathbf{0} \\ 
			\mathcal{J}_{ic} & \mathbf{0} \\ 
			\mathcal{J}_{bc} & \mathbf{0} \\ 
			\mathcal{J}_{bs,u} & \mathcal{J}_{bs,R} \\ 
			\mathbf{0} & \mathcal{J}_{fb,R} \\ 
			\mathcal{J}_{sc, u} & \mathcal{J}_{sc,R} 
		\end{pmatrix}. \label{eq:2}
	\end{equation}
	The $\mathbf{0}$ block matrices indicate that specific residual terms are strictly independent of the free boundary or phase field parameters, and therefore their corresponding partial derivatives vanish identically.
	
	\par Let $(\mathbf{x}_i, t_i)$ denote the respective collocation points. For the activation function is $\tau_j^{u}$, the Jacobian submatrix $\mathcal{J}_{e}$ corresponding to the governing equation is analytically evaluated as
	\begin{equation}
		(\mathcal{J}_{e})_{i,j} := \left. \frac{\partial \mathcal{R}_{e}}{\partial \alpha_{q, j}} \right|_{(\mathbf{x}_i, t_i)} = \frac{\partial \tau_j^{u}}{\partial t} \Big|_{(\mathbf{x}_i, t_i)} - k_{q} \Delta \tau_j^{u} \Big|_{(\mathbf{x}_i, t_i)}. \label{eq:3}
	\end{equation}
	\par Then, the submatrices $\mathcal{J}_{ic}$ and $\mathcal{J}_{bc}$, associated with the initial and boundary conditions of the phase fields, are defined respectively by
	\begin{equation}
		(\mathcal{J}_{ic})_{i,j} := \left. \frac{\partial \mathcal{R}_{ic}}{\partial \alpha_{q, j}} \right|_{(\mathbf{x}_i, 0)} = \tau_j^{u} \Big|_{(\mathbf{x}_i, 0)}, \label{eq:4}
	\end{equation}
	\begin{equation}
		(\mathcal{J}_{bc})_{i,j} := \left. \frac{\partial \mathcal{R}_{bc}}{\partial \alpha_{q, j}} \right|_{(\mathbf{x}_i, t_i)} = \tau_{j}^{u} \Big|_{(\mathbf{x}_i, t_i)}. \label{eq:5}
	\end{equation}
	
	\par At the moving boundary, we define the submatrices $\mathcal{J}_{bs,u}$ and $\mathcal{J}_{bs,R}$ for the phase evaluation evaluation with respect to parameters $\boldsymbol{\alpha}$ and $\boldsymbol{\gamma}$ as
	\begin{equation}
		\begin{split}
			(\mathcal{J}_{bs,u})_{i,j} :=& \left. \frac{\partial \mathcal{R}_{bs}}{\partial \alpha_{q, j}} \right|_{(\mathbf{x}_i, t_i)} = \tau_j^{u} \Big|_{(\mathbf{x}_i, t_i)}, \\
			(\mathcal{J}_{bs,R})_{i,j} :=& \left. \frac{\partial \mathcal{R}_{bs}}{\partial \gamma_j} \right|_{(\mathbf{x}_i, t_i)} = \left( \nabla u_{q} \cdot \tau_j^{R} \right) \Big|_{(\mathbf{x}_i, t_i)}.
		\end{split}
	\end{equation}
	\par For the initial condition of the localized free boundary, $\mathcal{J}_{fb,R}$ is formulated as
	\begin{equation}
		(\mathcal{J}_{fb,R})_{i,j} := \left. \frac{\partial \mathcal{R}_{fb}}{\partial \gamma_j} \right|_{(\mathbf{x}_i, t_i)} = \tau_j^{R} \Big|_{(\mathbf{x}_i, t_i)}. 
	\end{equation}
	
	\par For the Stefan condition linking both phase fields and the free boundary, $\mathcal{J}_{sc,u}$ and $\mathcal{J}_{sc,R}$ are derived as
	\begin{equation}
		\begin{split}
			(\mathcal{J}_{sc,u})_{i,j} :=& \left. \frac{\partial \mathcal{R}_{sc}}{\partial \alpha_{q, j}} \right|_{(\mathbf{x}_i, t_i)} = (-1)^{q+1}k_{q} \frac{\partial \tau_j^{u}}{\partial \mathbf{n}} \Big|_{(\mathbf{x}_i, t_i)}, \\
			(\mathcal{J}_{sc,R})_{i,j} :=& \left. \frac{\partial \mathcal{R}_{sc}}{\partial \gamma_j} \right|_{(\mathbf{x}_i, t_i)} = \left( k_{1} \frac{\partial^2 u_{1}}{\partial \mathbf{n} \partial \mathbf{x}} -k_{2} \frac{\partial^2 u_{2}}{\partial \mathbf{n} \partial \mathbf{x}} \right) \cdot \tau_j^{R} \Big|_{(\mathbf{x}_i, t_i)}.
		\end{split}
	\end{equation}
	
	The parameter update $\delta\boldsymbol{\eta} = (\delta\boldsymbol{\alpha}_{1}, \delta\boldsymbol{\alpha}_{2}, \delta\boldsymbol{\gamma})$ is calculated by solving the linear system
	\begin{equation}
		{\mathcal{J}^{\{k\}}}^T \mathcal{J}^{\{k\}} \delta\boldsymbol{\eta} = -{\mathcal{J}^{\{k\}}}^T \mathcal{T}^{\{k\}}, \label{eq:6}
	\end{equation}
	where $k$ denotes the current iteration step. To guarantee a stable and effective matrix inversion, $\delta\boldsymbol{\eta}$ is computed via the singular value decomposition (SVD) by utilizing the Moore--Penrose pseudo-inverse ${\mathcal{J}^{\{k\}}}^\dagger$. Let the SVD of $\mathcal{J}^{\{k\}} \in \mathbb{R}^{N_{total} \times M_{\eta}}$ (where $M_{\eta}$ represents the total dimension of $\boldsymbol{\eta}$) be expressed as
	\begin{equation}
		\mathcal{J}^{\{k\}} = U \Sigma V^T, \label{eq:20}
	\end{equation}
	where $U \in \mathbb{R}^{N_{total} \times r}$ and $V \in \mathbb{R}^{M_{\eta} \times r}$ are column-orthonormal matrices, and $\Sigma = \text{diag}\{\nu_1, \nu_2, \cdots, \nu_r, 0, \cdots\}$ contains the non-zero singular values strictly satisfying $\nu_1 \ge \nu_2 \ge \cdots \ge \nu_r > 0$, with $r = \text{rank}(\mathcal{J}^{\{k\}})$. While constructing the pseudo-inverse, we truncate trailing singular values smaller than an assigned threshold $\rho > 0$, keeping only the index set $\mathcal{I}_r = \{i : \nu_i \ge \rho\}$. The truncated pseudo-inverse of $\Sigma$ becomes
	\begin{equation}
		(\Sigma_r^\dagger)_{ii} = \begin{cases} 
			1/\nu_i, & \nu_i \ge \rho, \\ 
			0, & \nu_i < \rho. 
		\end{cases} \label{eq:7}
	\end{equation}
	
	Taking this into consideration, the regularized update is naturally deduced as
	\begin{equation}
		\delta\boldsymbol{\eta} = -V \Sigma_r^\dagger U^T \mathcal{T}^{\{k\}}. \label{eq:8}
	\end{equation}
	Thereby, the network parameter vector $\boldsymbol{\eta}$ is systematically advanced following a standard Newton step
	\begin{equation}
		\boldsymbol{\eta}^{\{k+1\}} = \boldsymbol{\eta}^{\{k\}} + \delta\boldsymbol{\eta}. \label{eq:9}
	\end{equation}
	
	This iteration loop terminates smoothly once the relative variation of the total loss function fulfills the given tolerance criterion
	\begin{equation}
		\big|Loss(\boldsymbol{\eta}^{\{k+1\}}) - Loss(\boldsymbol{\eta}^{\{k\}}) \big| \le \delta_0 \big |Loss(\boldsymbol{\eta}^{\{k+1\}})\big|. \label{eq:10}
	\end{equation}
	
	Here, $\boldsymbol{\eta}_0$ sets a predefined initial state, typically defined as $\boldsymbol{\eta}_0 = \mathbf{0}$. 
	
	Crucially, the inherent mathematical equivalence between solving the nonlinear residual vector form $\mathcal{T}(\boldsymbol{\eta}) = \mathbf{0}$ and minimizing the scalar functional $G(\boldsymbol{\eta})$ is rigorously guaranteed. This stems from the analytical relationship $\nabla_{\boldsymbol{\eta}} G = {\mathcal{J}}^T \mathcal{T}$, further substantiating that ${\mathcal{J}}^T \mathcal{J}$ serves as a reliable approximation to the local Hessian matrix. Such an established identity validates that the right-hand side term ${\mathcal{J}}^T \mathcal{T}$ of Eq. (\ref{eq:6}) authentically represents the gradient of $G$ relative to $\boldsymbol{\eta}$. Nonetheless, owing strictly to the highly non-convex nature of $G(\boldsymbol{\eta})$, this optimization scheme might eventually fall into local minimum for the parameters $\left(\boldsymbol{\alpha}_{1}, \boldsymbol{\alpha}_{2}, \boldsymbol{\gamma}\right)$.
	\subsection{PCELM Method}
	To further reduce the residuals remaining after the ELM pre-training step, we employ a perturbation-based correction strategy designed to exploit a locally quadratic approximation of the residual functional. Since the highly nonlinear and nonconvex nature of problem Eqs. (\ref{1}-\ref{3}) can cause the Gauss--Newton iteration to converge to a local minimizer $u_{q}^{M}~(q=1, 2)$ and $R^{H}$ with nonzero residuals, we seek to construct a refined solution of the form
	\begin{equation}
		\begin{split}
			&u_q^g =  u_q^M+ \epsilon u_q^{pc},\qquad \epsilon u_q^{pc} \approx u_q^g - u_q^M = e_{u,q},\quad q=1,2,\\
			&R^{g} = R^{H} + \epsilon R^{pc},\qquad \epsilon R^{pc} \approx R^{g} - R^{H}= e_{R},
		\end{split}
	\end{equation}
	where $\epsilon > 0$ is a small parameter proportional to $|Loss(u_{1}^{M},u_{2}^{M},R^{H})|$, and $u_q^{pc}$ and $R^{pc}$ represent the perturbation terms for the temperature field and moving boundary, respectively. We set the perturbation parameter $\epsilon = |Loss(u_{1}^{M},u_{2}^{M},R^{H})|$ to ensure numerical stability and proper scaling of the correction term. This choice ensures that the unknown perturbations $u_{1}^{pc}$, $u_{2}^{pc}$, $R^{pc}$ remain $\mathcal{O}(1)$ in magnitude. By solving the perturbation subproblem for $u_{1}^{pc}, u_{2}^{pc}, R^{pc}$ to high accuracy, the constructed corrections $\epsilon u_{1}^{pc}, \epsilon u_{2}^{pc}, \epsilon R^{pc}$ yield high-precision estimations of the true errors $e_{u,1}, e_{u,2}, e_{R}$, respectively.
	\par It should be emphasized that, in terms of the structure of the governing partial differential equations, the derived perturbation system is still of Stefan type. The essential nonlinear interaction linking the phase interface to the moving boundary is retained from the original formulation, although the source terms are replaced by modified expressions obtained through the Taylor expansion at $u_{1}^{M}$, $u_{2}^{M}$, and $R^{H}$. Therefore, the ELM-based framework developed for the original model can be naturally extended to this perturbation problem.
	\par More precisely, $u_{1}^{pc}$, $u_{2}^{pc}$, and $R^{pc}$ are represented individually by single-hidden-layer feedforward neural networks, where the hidden-layer weights and biases are assigned randomly and kept fixed, while only the coefficients in the output layer are determined through optimization
	\begin{equation}
		\begin{split}
			&u_q^{pc} = \sum_{j=1}^M \lambda_{j,q}\tau^{u}\left(\mathbf{w}^{pc}_{j}\cdot \mathbf{x}+\mathbf{w}^{pc}_{j}\cdot t+ b^{pc}_{j,q} \right),\quad q=1,2, \\
			&R^{pc} = \sum_{j=1}^H \theta_{j} \tau^{R}(\mathbf{w}^{pc}_{j} \cdot t + b^{pc}_{j}),
		\end{split}
	\end{equation}
	where $\tau^{u}$ and $\tau^{R}$ are nonlinear activation functions introduced for enhanced expressivity, remaining distinct from those utilized in $u_{1}^{M}, u_{2}^{M}$, and $R^{H}$. The parameters $\mathbf{w}^{pc}_{j}$ and $b^{pc}_{j,q}, b^{pc}_{j}$ are randomly initialized and held fixed. We adopt $\sin$ as the uniform activation function for both the primary ELM framework and the correction step, as it demonstrates superior capability in capturing high-frequency oscillations compared to standard hyperbolic tangents.
	\par The parameter vector $\boldsymbol{\xi} = (\boldsymbol{\lambda_{1}}, \boldsymbol{\lambda_{2}}, \boldsymbol{\theta})$ is optimized by minimizing the following least-squares problem
	\begin{equation} \label{eq:gamma_opt}
		\min_{\boldsymbol{\xi}} G_p(\boldsymbol{\xi}) = \min_{\boldsymbol{\xi}} \left\| \mathcal{T}_p \right\|^2_2,
	\end{equation}
	where $\mathcal{T}_p$ constitutes the discretized residual vector for the perturbation formulation
	\begin{equation}
		\mathcal{T}_p = \frac{1}{\epsilon} 
		\begin{pmatrix}
			\mathcal{R}_{e,q}^{pc} \\
			\mathcal{R}_{ic,q}^{pc} \\
			\mathcal{R}_{bc,q}^{pc} \\
			\mathcal{R}_{bs,q}^{pc} \\
			\mathcal{R}_{fb}^{pc}\\
			\mathcal{R}_{sc}^{pc}
		\end{pmatrix},
	\end{equation}
	which is evaluated at the respective collocation point sets $N_{e}^{pc}$, $N_{ic}^{pc}$, $N_{bc}^{pc}$, $N_{bs}^{pc}$, $N_{fb}^{pc}$ and $N_{sc}^{pc}$.
	\par Explicitly, for each interior collocation point $(\mathbf{x}_i,t_i) \in N_{e}^{pc}$, the pointwise-defined perturbation residual $\mathcal{R}_{e,q}^{pc}$ is derived by substituting the perturbation ansatz $u_q^g = u_q^M + \epsilon u_q^{pc}$ into the governing equation (\ref{1}). Expanding in powers of $\epsilon$ and invoking the linearity of the PDE operator $L_q(u) = \frac{\partial u}{\partial t} - k_q \Delta u$, the expansion terminates exactly at $\mathcal{O}(\epsilon)$
	\begin{equation}
		\mathcal{R}_{e,q}^{pc}(\mathbf{x}_i, t_i) = \epsilon^0 \left( \frac{\partial u_q^M}{\partial t} - k_q \Delta u_q^M \right) + \epsilon^1 \left( \frac{\partial u_q^{pc}}{\partial t} - k_q \Delta u_q^{pc} \right).
	\end{equation}
	The corresponding Jacobian matrix entries evaluated at $(\mathbf{x}_i, t_i)$ are obtained by differentiating the residual with respect to the continuous parameters $\lambda_{j,q}$
	\begin{equation}
		(\mathcal{J}^{pc}_{e,q})_{i, j} = \left. \frac{\partial \mathcal{R}_{e,q}^{pc}}{\partial \lambda_{j,q}} \right|_{(\mathbf{x}_i, t_i)} = \epsilon \left( \frac{\partial \tau^{u}_{j,q}}{\partial t} - k_q \Delta \tau^{u}_{j,q} \right).
	\end{equation}
	Evidently, the Jacobian solely receives contributions from the $\mathcal{O}(\epsilon)$ term, and the $\mathcal{O}(\epsilon^2)$ contribution guarantees to be zero. 
	
	Analogously, for the initial and fixed boundary points residing in $N_{ic}^{pc}$ and $N_{bc}^{pc}$, we deduce the residuals by substituting $u_q^g$ into Eq. (\ref{2})
	\begin{equation}
		\begin{split}
			&\mathcal{R}^{pc}_{ic,q}(\mathbf{x}_i, 0) =  u_q^M - g_q  + \epsilon u_q^{pc},\\
			&\mathcal{R}^{pc}_{bc,q}(\mathbf{x}_i, t_i) =  u_q^M - h_q  + \epsilon u_q^{pc}.
		\end{split}
	\end{equation}
	Their associated Jacobian entries formulated concerning $\lambda_{j,q}$ read as follows
	\begin{equation}
		\begin{split}
			&(\mathcal{J}^{pc}_{ic,q})_{i, j} =\left. \frac{\partial \mathcal{R}_{ic,q}^{pc}}{\partial \lambda_{j,q}}\right|_{(\mathbf{x}_i, 0)} = \epsilon \, \tau_{j,q}^u,\\
			&(\mathcal{J}^{pc}_{bc,q})_{i, j} =\left. \frac{\partial \mathcal{R}_{bc,q}^{pc}}{\partial \lambda_{j,q}}\right|_{(\mathbf{x}_i, t_i)} = \epsilon \, \tau_{j,q}^u.
		\end{split}
	\end{equation}
	
	For points allocated on the moving interface $(\mathbf{x}_i, t_i) \in N_{bs}^{pc}$, the Taylor expansion of the isothermal boundary residual $\mathcal{R}_{bs,q}^{pc}$ up to $\mathcal{O}(\epsilon^2)$ yields
	\begin{equation}
		\begin{split}
			\mathcal{R}_{bs,q}^{pc} &\approx  \epsilon^{0} u_q^M(R^H)   \\
			&+ \epsilon^{1}  \left( R^{pc} \cdot \nabla u_q^M(R^H) + u_q^{pc}(R^H) \right)   \\
			&+ \epsilon^2 \left( \frac{1}{2} (R^{pc})^T H_x[u_q^M](R^H) R^{pc} + R^{pc} \cdot \nabla u_q^{pc}(R^H) \right),
		\end{split}
	\end{equation}
	where $H_x[u_q^M]$ denotes the spatial Hessian matrix. The corresponding Jacobian block matrices $\mathcal{J}^{pc}_{bs,uq}$ and $\mathcal{J}^{pc}_{bs,R}$ evaluate to
	\begin{equation}
		\begin{split}
			(\mathcal{J}^{pc}_{bs,uq})_{i, j} = \left.\frac{\partial \mathcal{R}_{bs,q}^{pc}}{\partial \lambda_{j,q}} \right|_{(\mathbf{x}_i, t_i)} =& \, \epsilon \tau_{j,q}^u + \epsilon^2 \left( R^{pc} \cdot \nabla \tau_{j,q}^u \right), \\
			(\mathcal{J}^{pc}_{bs,R})_{i, j} = \left.\frac{\partial \mathcal{R}_{bs,q}^{pc}}{\partial \theta_j}\right|_{(\mathbf{x}_i, t_i)} =& \, \epsilon \left( \tau_j^R \cdot \nabla u_q^M \right) + \epsilon^2 \left( \tau_j^R \cdot \nabla u_q^{pc} + (\tau_j^R)^T H_x[u_q^M] R^{pc} \right).
		\end{split}
	\end{equation}
	
	For points within the initial free boundary set $N_{fb}^{pc}$, the perturbation-correction residual simplifies to
	\begin{equation}
		\mathcal{R}^{pc}_{fb}(\mathbf{x}_i, 0) =  R^H - R_{0}  + \epsilon R^{pc}.
	\end{equation}
	Its derivative with respect to $\theta_j$ is given exclusively by
	\begin{equation}
		(\mathcal{J}^{pc}_{fb,R})_{i, j} =\left.\frac{\partial \mathcal{R}_{fb}^{pc}}{\partial \theta_{j}}\right|_{(\mathbf{x}_i, 0)} = \epsilon \tau_{j}^R .
	\end{equation} 
	
	Considering the normal vector $\mathbf{n}$ is projected at the intermediate background interface $R^H$, we similarly expand the normal derivative components for the Stefan condition $\mathcal{R}_{sc}^{pc}$
	\begin{equation}
		\begin{split}
			&\mathcal{R}_{sc}^{pc}\\
			&\approx  \epsilon^{0}\left(\sum_{q=1}^2 (-1)^{q+1}k_q \partial_{\mathbf{n}} u_q^M(R^H) - h(R^H)\right) \\
			&+ \epsilon  \left( \sum_{q=1}^2 (-1)^{q+1}k_q \left( \partial_{\mathbf{n}} u_q^{pc}(R^H) + R^{pc} \cdot \nabla(\partial_{\mathbf{n}} u_q^M) \right) - R^{pc} \cdot \nabla h \right)  \\
			&+ \epsilon^2  \left( \sum_{q=1}^2 (-1)^{q+1}k_q \left( R^{pc} \cdot \nabla(\partial_{\mathbf{n}} u_q^{pc}) + \frac{1}{2} (R^{pc})^T H_x[\partial_{\mathbf{n}} u_q^M] R^{pc} \right)\right.\\
			&\left. - \frac{1}{2} (R^{pc})^T H_x[h] R^{pc} \right) .
		\end{split}
	\end{equation}
	
	The complementary Jacobian mappings for $\mathcal{R}_{sc}^{pc}$ follow systematically as
	\begin{equation}
		\begin{split}
			(\mathcal{J}^{pc}_{sc,uq})_{i, j} = \left.\frac{\partial \mathcal{R}_{sc}^{pc}}{\partial \lambda_{j,q}}\right|_{(\mathbf{x}_i, t_i)} =&  \epsilon (-1)^{q+1}k_q \partial_{\mathbf{n}} \tau_{j,q}^u + \epsilon^2 (-1)^{q+1}k_q \left( R^{pc} \cdot \nabla (\partial_{\mathbf{n}} \tau_{j,q}^u) \right), \\
			(\mathcal{J}^{pc}_{sc,R})_{i, j} = \left.\frac{\partial \mathcal{R}_{sc}^{pc}}{\partial \theta_j}\right|_{(\mathbf{x}_i, t_i)} =& \, \epsilon \left( \sum_{q=1}^2 (-1)^{q+1}k_q \left( \tau_j^R \cdot \nabla(\partial_{\mathbf{n}} u_q^M) \right) - \tau_j^R \cdot \nabla h \right) \\
			+& \epsilon^2 \left( \sum_{q=1}^2 (-1)^{q+1}k_q \left( \tau_j^R \cdot \nabla(\partial_{\mathbf{n}} u_q^{pc})\right.\right.\\
			+& \left.\left.(\tau_j^R)^T H_x[\partial_{\mathbf{n}} u_q^M] R^{pc} \right)- (\tau_j^R)^T H_x[h] R^{pc} \right).
		\end{split}
	\end{equation}
	
	Gathering the components, the global Gauss--Newton Jacobian forms the block-structured matrix
	\begin{equation}
		\mathcal{J}_p = \frac{\partial \mathcal{T}_p}{\partial \boldsymbol{\xi}} = \frac{1}{\epsilon}
		\begin{pmatrix}
			\mathcal{J}^{pc}_{e} & 0 \\
			\mathcal{J}^{pc}_{ic} & 0 \\
			\mathcal{J}^{pc}_{bc} & 0 \\
			\mathcal{J}^{pc}_{bs,u} &\mathcal{J}^{pc}_{bs,R} \\
			0 & \mathcal{J}^{pc}_{fb,R} \\
			\mathcal{J}^{pc}_{sc,u} & \mathcal{J}^{pc}_{sc,R}
		\end{pmatrix},
	\end{equation}
	with the submatrices evaluated over their corresponding subsets discretizing the computational domain. 
	
	With the explicit construction of the perturbation residual vector $\mathcal{T}_p$ and the global Jacobian $\mathcal{J}_p$, the associated least-squares subproblem Eq. (\ref{eq:gamma_opt}) can be rigorously optimized via the Gauss--Newton method, mirroring the routine configured for the initial stage. Integrating both mechanisms, we synthesize the entire two-step workflow in the succeeding algorithm. The perturbation correction not only bolsters numerical accuracy but, as elucidated in Section 4, ensures a strictly convex quadratic formulation sequentially for the correction sequence, engendering predictable parameter updates and well-conditioned convergence.
	\begin{remark}
		In both the ELM and PCELM frameworks for solving the Stefan problem, a dynamic resampling strategy is employed on the free boundary following each iteration to track its morphological changes.
	\end{remark}
	\section{Convexity analysis of PCELM method}
	\par \par Following the perturbation expansion $u_q^g = u_q^M + \epsilon u_q^{pc}$ and $R^{g} = R^{H} + \epsilon R^{pc}$, we define the incremental state parameters as $\mathbf{f} = [u^{pc}_q, R^{pc}]^\top$. The high-order boundary residual $\mathcal{R}(\mathbf{f})$ at the baseline interface $R^{H}$ can be truncated up to $O(\epsilon^2)$ as follows:
	\begin{equation}
		\mathcal{R}(\mathbf{f}) = \epsilon^{0} F_0 + \epsilon^{1} F_1 + \epsilon^{2} F_2,
	\end{equation}
	where $F_0 = u_q^{M}(R^{H})$, $F_1 = u_q^{pc} + u^{M}_{q,x}R^{pc}$, and the second-order geometric correction is exactly given by:
	\begin{equation}
		F_2 = u_{q,x}^{pc}R^{pc} + \frac{1}{2}u_{q,xx}^{M}(R^{pc})^2.
	\end{equation}
	The objective of the perturbation correction stage, evaluated over the boundary domain (or discretized points), is to minimize the corresponding $L^2$-norm of the residual:
	\begin{equation}
		J(\mathbf{f}) = \frac{1}{2} \| \mathcal{R}(\mathbf{f}) \|^2_{l^2},
	\end{equation}
	where the residual vector $\mathcal{R}$ is evaluated over a set of points.
	\par To rigorously analyze the local convexity, we examine the Hessian $\mathbf{H}_J$ with respect to the continuous domain variables. Applying the generalized Newton formulation, the total Hessian decomposes into:
	\begin{equation}
		\mathbf{H}_J = (\nabla_\mathbf{f} \mathcal{R})(\nabla_\mathbf{f} \mathcal{R})^\top + \mathcal{R} \nabla^2_\mathbf{f} \mathcal{R}\\
		:=\mathbf{H}_{GN}+\mathbf{H}_{NL}.
	\end{equation}
	The local gradient vector evaluating at a generic boundary location takes the form:
	\begin{equation}
		\nabla_\mathbf{f} \mathcal{R} = \begin{pmatrix} \frac{\partial \mathcal{R}}{\partial u_{q}^{pc}} \\ \frac{\partial \mathcal{R}}{\partial R^{pc}} \end{pmatrix} \approx \begin{pmatrix} \epsilon \\ \epsilon u_{q,x}^{M} + \epsilon^2 (u_{q,x}^{pc} + u_{q,xx}^{M} R^{pc}) \end{pmatrix}.
	\end{equation}
	
	Here, $\mathbf{H}_{GN}$ is structurally positive semi-definite. For a multi-point integral set or multi-phase coupling, the aggregated $\mathbf{H}_{GN}$ spans the parameter subspace and accumulates to a strictly positive definite condition, yielding a dominant minimal positive eigenvalue $\lambda_{min}(\mathbf{H}_{GN}) > 0$. Meanwhile, the second-order derivative matrix $\nabla^2_\mathbf{f} \mathcal{R}$ introduces indefinite geometry-gradient interactions at $O(\epsilon^2)$:
	\begin{equation}
		\nabla^2_\mathbf{f} \mathcal{R} \approx \epsilon^2 \begin{pmatrix} 0 & \frac{\partial (u_{q,x}^{pc})}{\partial (u_q^{pc})} \\ \frac{\partial (u_{q,x}^{pc})}{\partial (u_q^{pc})} & u_{q,xx}^{M} \end{pmatrix}.
	\end{equation}
	
	\begin{theorem} 
		The functional $J(\mathbf{f})$ constitutes a locally convex optimization basin, provided a sufficiently small $\epsilon$ and $\mathcal{R} \to 0$.
	\end{theorem}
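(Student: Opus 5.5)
The plan is a Weyl-type eigenvalue perturbation argument applied to the Hessian split $\mathbf{H}_J=\mathbf{H}_{GN}+\mathbf{H}_{NL}$ stated just before the theorem. Convexity of $J$ on a neighbourhood of the current iterate is equivalent to $\mathbf{H}_J\succeq 0$ there, and strict local convexity to $\mathbf{H}_J\succ 0$. So it suffices to show that the indefinite part $\mathbf{H}_{NL}=\mathcal{R}\,\nabla^2_{\mathbf f}\mathcal{R}$ is small in operator norm compared with the smallest eigenvalue of $\mathbf{H}_{GN}$. All estimates are taken over the discrete collocation set, so every matrix is finite dimensional, and the bounds are uniform over $\mathbf f$ in a bounded ball $\mathcal{B}=\{\|\mathbf f\|\le C_0\}$. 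This matches the scaling $\epsilon=|Loss|$, which keeps $u^{pc}_q$ and $R^{pc}$ of order $\mathcal{O}(1)$.

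\emph{Step 1 (lower bound for $\mathbf{H}_{GN}$).} From the displayed gradient,
\[
\nabla_{\mathbf f}\mathcal{R}=\epsilon\,\big(1,\;u^M_{q,x}\big)^\top+\mathcal{O}(\epsilon^2).
\]
Summing the rank-one matrices $(\nabla_{\mathbf f}\mathcal{R})(\nabla_{\mathbf f}\mathcal{R})^\top$ over the collocation points gives
\[
\mathbf{H}_{GN}=\epsilon^2 A^\top A+\mathcal{O}(\epsilon^3),
\]
where $A$ is the leading-order Jacobian built from the blocks $\mathcal{J}^{pc}$ at order $\epsilon$. This is exactly the Jacobian of the linearized, i.e.\ first-order, perturbation problem. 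I will take full column rank of $A$ as a standing assumption, setting $\sigma_{\min}(A)=:\sigma_0>0$. This is the assumption the text before the theorem asserts for the multi-point set, and it holds for instance if the features at the sampled points are linearly independent and $u^M_{q,x}\neq 0$ along $R^H$ so that the $R^{pc}$ direction is seen. Weyl's inequality then gives
\[
\lambda_{\min}(\mathbf{H}_{GN})\ge \epsilon^2\sigma_0^2-C_1\epsilon^3 .
\]

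\emph{Step 2 (upper bound for $\mathbf{H}_{NL}$).} From the displayed second derivative, $\|\nabla^2_{\mathbf f}\mathcal{R}\|\le C_2\epsilon^2$ on $\mathcal{B}$. The constant $C_2$ depends on bounds for $u^M_{q,xx}$ and on the derivatives of the fixed sine features, all of which are bounded since the features are smooth. Hence
\[
\|\mathbf{H}_{NL}\|\le C_2\epsilon^2\|\mathcal{R}\|_{l^2}.
\]
\emph{Step 3 (conclusion).} Weyl again yields
\[
\lambda_{\min}(\mathbf{H}_J)\ge \epsilon^2\big(\sigma_0^2-C_1\epsilon-C_2\|\mathcal{R}\|\big).
\]
This is strictly positive once $\epsilon<\sigma_0^2/(2C_1)$ and $\|\mathcal{R}\|<\sigma_0^2/(2C_2)$. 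Both conditions hold under the hypotheses ``$\epsilon$ sufficiently small and $\mathcal{R}\to0$''. Here one notes that $\mathcal{R}=\epsilon^0F_0+\dots$ with $F_0=\mathcal{O}(\epsilon)$, because $\epsilon$ is the initial loss, so $\|\mathcal{R}\|$ is itself small near the minimizer. Thus $\mathbf{H}_J\succ0$ on a neighbourhood where $\|\mathcal{R}\|$ stays below that threshold. Because $J$ is $C^2$ (indeed polynomial in $\mathbf f$), this gives a convex basin, and the minimizer in it is unique.

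The main obstacle is Step 1: the lower bound on $\sigma_{\min}(A)$ is genuinely an assumption on the sampling and on the random features. The ELM matrices can be badly conditioned, so $\sigma_0$ may be tiny. I would handle this in one of two ways. The first is to restrict to the range of the truncated SVD used in the Gauss--Newton step, so that effectively $\sigma_0\ge\rho$, the truncation threshold; convexity is then stated on that subspace. The second is to state the column-rank condition explicitly as a hypothesis, which is what I would try first. A secondary care point is uniformity: the constants $C_1$ and $C_2$ must be taken over the ball $\mathcal{B}$, and that requires the $\mathcal{O}(1)$ scaling of $u^{pc}_q$ and $R^{pc}$ discussed above.
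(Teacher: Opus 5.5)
Your proposal follows the same route as the paper's proof. It splits $\mathbf{H}_J=\mathbf{H}_{GN}+\mathbf{H}_{NL}$, applies Weyl's inequality, obtains $\lambda_{\min}(\mathbf{H}_{GN})>0$ by aggregating over collocation points, and lets $\mathcal{R}\to 0$ suppress the indefinite term. Your version is actually sharper than the paper's: it states the full-column-rank hypothesis on $A$ explicitly, and it shows that both parts scale like $\epsilon^2$, so the condition that really matters is that $\|\mathcal{R}\|$ be small relative to $\sigma_0^2$. One fix is needed: restrict to a convex set, such as $\mathcal{B}$ itself, where $\|\mathcal{R}(\mathbf f)\|\le\|F_0\|+C\epsilon$, rather than the possibly nonconvex region where $\|\mathcal{R}\|$ stays below the threshold.
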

	\begin{proof}
		For the sub-problem to be strictly convex, we require $\mathbf{H}_J \succ 0$. Applying Weyl's inequality for Hermitian matrices yields
		\begin{equation}
			\lambda_{min}(\mathbf{H}_J) \ge \lambda_{min}(\mathbf{H}_{GN}) - \epsilon^2 \|\mathcal{H}_{NL}\|_2.
		\end{equation}
		In the vicinity of the solution, the residual approaches zero ($\mathcal{R} \to 0$). Consequently, the magnitude of the indefinite nonlinear variations $\epsilon^2 \mathcal{R} \nabla^2_\mathbf{f} \mathcal{R}$ asymptotically vanishes. 
		Provided that the spatial summation over proper boundary points grants $\lambda_{min}(\mathbf{H}_{GN}) > \mathcal{O}(\epsilon^2 |\mathcal{R}|)$, the positive eigenvalues of the Gauss-Newton matrix $\mathbf{H}_{GN}$ strictly dominate the scaled indefinite curvature terms. Specifically, embedding the $O(\epsilon^2)$ compensation acts as an asymptotic geometric damper, preventing singular curvature explosions. Thus, the optimization unconditionally maintains positive definiteness, ensuring a stable, locally convex basin.
	\end{proof}
	\begin{remark}
		While the perturbation correction stage preserves the topological structure of the original free boundary, it exhibits significantly reduced nonlinearity compared to the primary Stefan problem. Specifically, once the initialization results are established, the optimization landscape for the perturbation correction is dictated by a truncated local quadratic model. This formulation ensures that the computational complexity remains decoupled from the intrinsic nonlinearities between the phase field and the free boundary. Consequently, the correction step maintains numerical stability and efficiency, facilitating the rapid convergence observed in the empirical evaluations in Section 5.
	\end{remark}

	\section{Numerical examples}
	\label{sec5}
	In this section, a series of numerical experiments is conducted to evaluate the efficiency and robustness of the PCELM framework for solving Stefan problems. Table \ref{tab:3} presents the initial stage hyperparameters for the following five numerical examples. During the perturbation correction stage, these hyperparameter values are doubled to solve the Stefan problems.
	\par To quantify the numerical approximation, we introduce the following norms. Denote $u_{pre}(\boldsymbol{x}, t)$ and $u_{exa}(\boldsymbol{x}, t)$ as predicted solutions and exact solutions, respectively. $(\boldsymbol{x}_{i},t_{i})^{N_{K}}_{i=1}$ is denoted as the testing points on the space-time domain. To quantify the precision of the solutions, we use the relative $L_{2}$ norms uniformly, which follows:
	\begin{equation}
		||\mathrm{e}_{u}||_{r,2}=\sqrt{\frac{\sum_{i=1}^{N_{K}}\left|u_{exact}(\boldsymbol{x}_{i}, t_{i})-u_{pred}(\boldsymbol{x}_{i},t_{i}) \right|^2}{\sum_{i=1}^{N_{K}}|u_{exact}(\boldsymbol{x}_{i},t_{i})|^2}}.
	\end{equation}
	Similarly, we denote $(t_{j})_{j=1}^{N_{P}}$ as the testing points in the time domain. $R_{pre}(t)$ and $R_{exa}(t)$ are denoted as the predicted free boundary and the exact free boundary separately. Thus, the relative $L_{2}$ norms for the evolving interface can be defined as:
	\begin{equation}
		||\mathrm{e}_{R}||_{r,2}=\sqrt{\frac{\sum_{j=1}^{N_{P}}\left|R_{exact}(t_{j})-R_{pred}(t_{j}) \right|^2}{\sum_{j=1}^{N_{P}}|R_{exact}(t_{j})|^2}}.
	\end{equation}
	Moreover, to measure the numerical approximation in the Frank-sphere, the $L_{\infty}$ norm can also be defined as
	\begin{equation}
		||\mathrm{e}_{u}||_{\infty}=\max_{k\in N_{Q}}\left|u_{exact}(\boldsymbol{x}_{k},t_{k})-u_{pred}(\boldsymbol{x}_{k},t_{k}) \right|,
	\end{equation}
	where $(\boldsymbol{x}_{k},t_{k})^{N_{Q}}_{k=1}$ is denoted as the testing points on the computational domain.
	
	\begin{table}[t]
		\centering
		\caption{Network hyperparameters for the five numerical experiments, including the number of neurons and the uniform initialization ranges for weights and biases.}\label{tab:3}
		\begin{tabular}{l c c c c c }
			\hline
			\multirow{2}{*}{Case} & \multirow{2}{*}{Neurons}& \multicolumn{2}{c}{Stage 1 $u$} & \multicolumn{2}{c}{Stage 1 $R$}  \\
			\cline{3-6} 
			& & Weight & Bias & Weight & Bias \\
			\hline
			Example 5.1 & 500 & $[-2, 2]$ & $[-2, 2]$ & $[-2, 2]$ & $[-2, 2]$ \\
			Example 5.2 & 700& $[-2, 2],[-3,3]$ & $[-\pi, \pi]$ & $[-1, 1]$ & $[-\pi, \pi]$ \\
			Example 5.3 & 1000 & $[-2, 2]$ & $[-\pi, \pi]$ & $[-2, 2]$ & $[-\pi, \pi]$ \\
			Example 5.4.1 & 1500 & $[-2\pi, 2\pi]$ & $[-\pi, \pi]$ & $[-1, 1]$ & $[-2, 2]$ \\
			Example 5.4.2 & 3000 & $[-2\pi, 2\pi]$ & $[-\pi, \pi]$ & $[-1, 1]$ & $[-2, 2]$ \\
			\hline
		\end{tabular}
	\end{table}
	
	\subsection{One-dimensional one-phase Stefan problem}
	\label{subsec1}
	For the first numerical test, we examine a classical one-dimensional one-phase Stefan problem that has been considered previously in \cite{24, FP, RA}. The mathematical model, along with its initial and boundary conditions, can be expressed as
	\begin{equation}
		\frac{\partial u}{\partial t} - \frac{\partial^2 u}{\partial x^2} = 0,\ 0 \leq x \leq R(t), 0 \leq t \leq 1,
		\label{eq:37}
	\end{equation}
	\begin{equation}
		u(x, 0) = -\frac{x^2}{2} + 2x - \frac{1}{2},\ 0 \leq x \leq R(0),
		\label{eq:38}
	\end{equation}
	\begin{equation}
		\frac{\partial u(0, t)}{\partial x} = 2,\ 0 \leq t \leq 1,
	\end{equation}
	The moving interface satisfies the following conditions:
	\begin{equation}
		R(0) = 2 - \sqrt{3}, u(R, t) = 0,\ 0 \leq t \leq 1, \frac{\partial u(R, t)}{\partial x} = \sqrt{3 - 2t},\ 0 \leq t \leq 1.
	\end{equation}
	The exact solution and the exact free boundary are expressed by
	\begin{equation}
		u(x, t) = -\frac{x^2}{2} + 2x - \frac{1}{2} - t,\ 0 \leq x \leq s(t),
	\end{equation}
	\begin{equation}
		R(t) = 2 - \sqrt{3 - 2t}.
	\end{equation}
	\par Figure \ref{fig:1} shows numerical results for the temperature field and the moving interface. Figure \ref{fig:sub7} and \ref{fig:sub8} show the absolute errors distribution before and after the perturbation correction which match closely. From Figure \ref{fig:sub1}, we can see the history of RMSE and $||e_{u}||_{r,2}$, which show the similar decreasing trend in the whole training.  Figure \ref{fig:sub2} shows the point-wise absolute error of $u(x,t)$ which achieves a high order of $10^{-12}$. Figure \ref{fig:sub5} shows the comparison of the numerical and exact free boundaries, where their high degree of consistency indicates the effectiveness of the proposed method. Figure \ref{fig:sub6} presents the time evolution of absolute error of $R(t)$ over the time domain $(0,1]$.  It illustrates the highly accuracy of the PCELM method for solving the Stefan problem.
	\par In the next numerical experiment, we test the PCELM method for the one-dimensional two-phase Stefan problem.

	\begin{figure}[p] 
		\centering
		\begin{subfigure}[b]{0.45\textwidth}
			\centering
			\includegraphics[width=\textwidth]{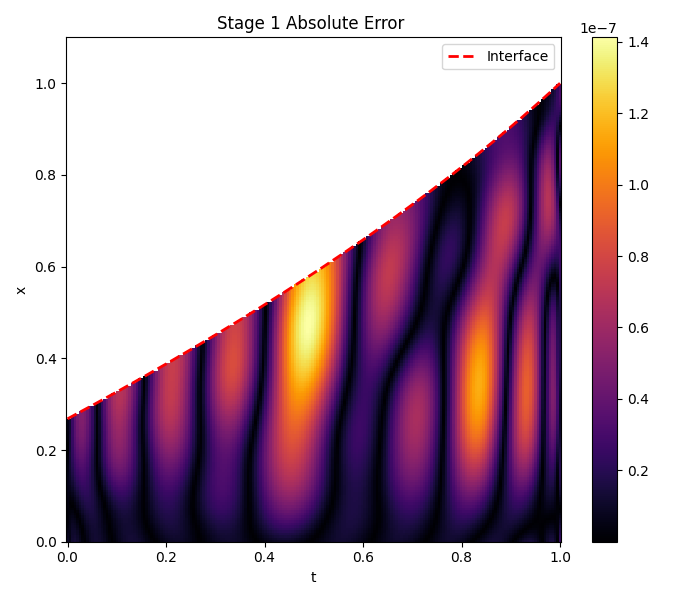} 
			\caption{}
			\label{fig:sub7}
		\end{subfigure}
		\hfill
		\begin{subfigure}[b]{0.45\textwidth}
			\centering
			\includegraphics[width=\textwidth]{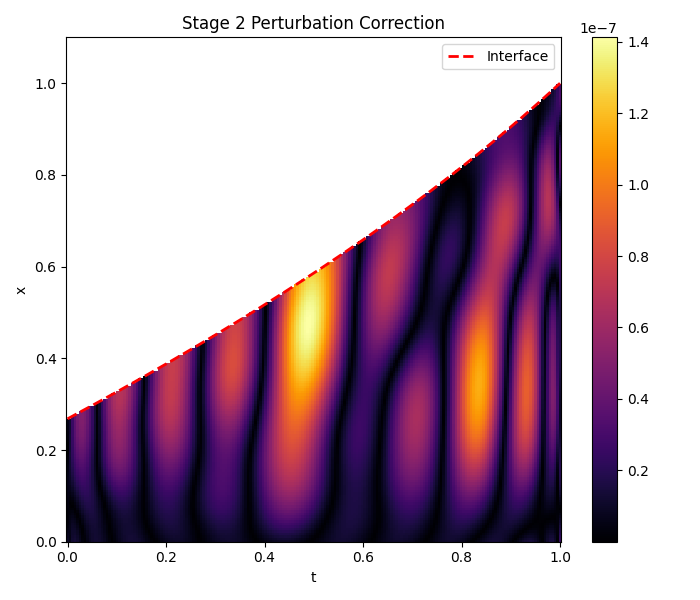} 
			\caption{}
			\label{fig:sub8}
		\end{subfigure}
		\begin{subfigure}[b]{0.45\textwidth}
			\centering
			\includegraphics[width=\textwidth]{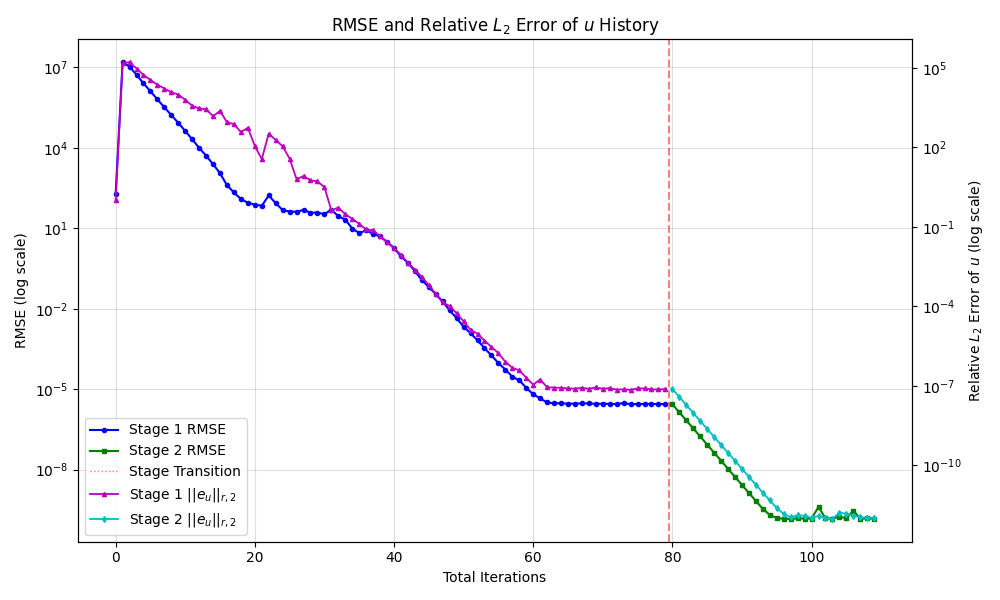}
			\caption{}
			\label{fig:sub1}
		\end{subfigure}
		\hfill
		\begin{subfigure}[b]{0.45\textwidth}
			\centering
			\includegraphics[width=\textwidth]{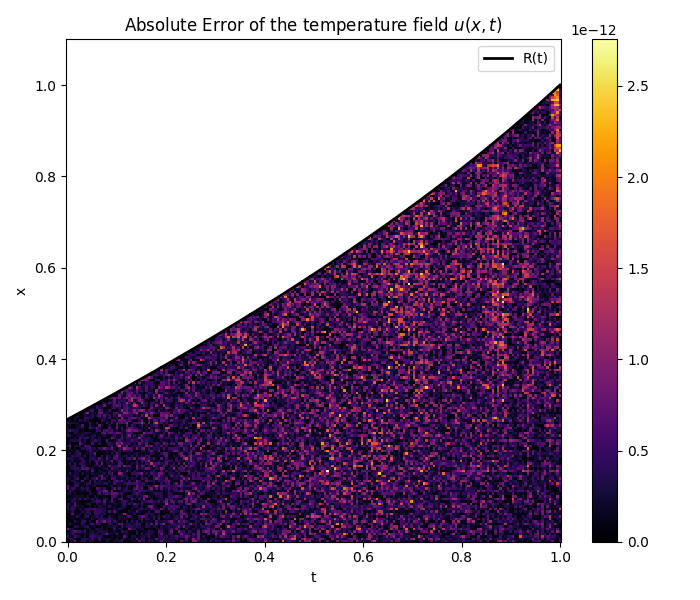}
			\caption{}
			\label{fig:sub2}
		\end{subfigure}
		
		\begin{subfigure}[b]{0.45\textwidth}
			\centering
			\includegraphics[width=\textwidth]{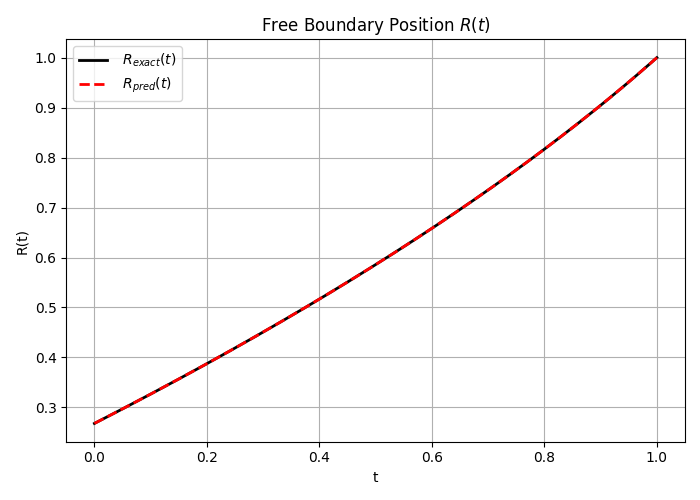} 
			\caption{}
			\label{fig:sub5}
		\end{subfigure}
		\hfill
		\begin{subfigure}[b]{0.45\textwidth}
			\centering
			\includegraphics[width=\textwidth]{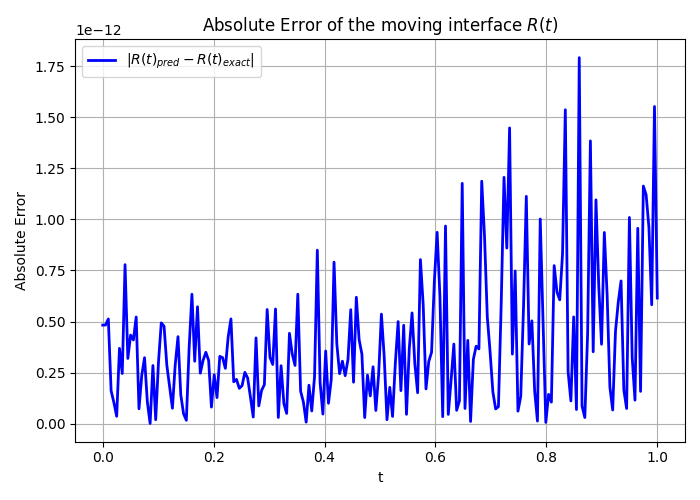} 
			\caption{}
			\label{fig:sub6}
		\end{subfigure}

		\caption{Numerical results for the one-dimensional one-phase Stefan problem: 
			(a) Absolute error distribution of the phase in Stage 1;
			(b) Visualization of the phase in the perturbation correction;
			(c) Evolution of the residual norm and Relative $L_{2}$ Error of the phase in the training; 
			(d) Point-wise absolute error of the temperature field;
			(e) Free boundary position comparison;
			(f) Evolution of the absolute error of the moving interface. }
		\label{fig:1}
	\end{figure}
	\subsection{One-dimensional two-phase Stefan problem}\label{subsec2}
	To further test efficiency of the proposed method, one-dimensional two-phase Stefan problem is chosen as the second numerical experiment \cite{24, FP, BD}. Unlike the one-phase Stefan problem, two-phase Stefan problem simulates the two phases evolve with the time-dependent free boundary. We consider the following governing equation for the two phases: 
	\begin{equation}
		\frac{\partial u_1}{\partial t} - 2\frac{\partial^2 u_1}{\partial x^2} = 0,\ 0 \leq x \leq R(t), 0 \leq t \leq 1,
	\end{equation}
	\begin{equation}
		\frac{\partial u_2}{\partial t} - \frac{\partial^2 u_2}{\partial x^2} = 0,\ R(t) \leq x \leq 2, 0 \leq t \leq 1.
	\end{equation}
	The initial and the boundary conditions are given by: 
	\begin{equation}
		u_1(x, 0) = 2e^\frac{1-2x}{4} - 2,\ 0 \leq x \leq R(0), u_2(x, 0) = e^\frac{1-2x}{2} - 1,\ R(0) \leq x \leq 2,
	\end{equation}
	\begin{equation}
		u_1(0, t) = 2e^\frac{2t+1}{4} - 2,\ 0 \leq t \leq 1, u_2(2, t) = e^\frac{2t-3}{2} - 1,\ 0 \leq t \leq 1.
	\end{equation}
	The free boundary satisfies the following conditions:
	\begin{equation}
		R(0) = \frac{1}{2}, u_1(R, t) = 0,\ 0 \leq t \leq 1, u_2(R, t) = 0,\ 0 \leq t \leq 1,
	\end{equation}
	\begin{equation}
		\frac{\text{d}R}{\text{d}t} = -2\frac{\partial u_1(R, t)}{\partial x} + \frac{\partial u_2(R, t)}{\partial x},\ 0 \leq t \leq 1.
	\end{equation}
	The exact solution is given by:
	\begin{equation}
		u_1(x, t) = 2e^\frac{2t-2x+1}{4} - 2,\ 0 \leq x \leq R(t),
	\end{equation}
	\begin{equation}
		u_2(x, t) = e^\frac{2t-2x+1}{2} - 1,\ R(t) \leq x \leq 2,
	\end{equation}
	\begin{equation}
		R(t) = t + 0.5.
	\end{equation}
	\par Figure \ref{fig:2} shows the comparisons of the exact solutions and the numerical solutions, including the temperature field and the free boundary. Figure \ref{fig:sub27} and \ref{fig:sub28} exhibit the spatiotemporal distribution of the absolute error before and after the perturbation correction, respectively. Figure \ref{fig:sub21} shows the evolution of residual norm and $||e_{R}||_{r,2}$ by the PCELM method. Figure \ref{fig:sub22} shows the absolute error distribution of the temperature field. Notably, Figure \ref{fig:sub25} depicts the numerical and exact free boundary position which matches very well. Furthermore, Figure \ref{fig:sub26} reveals that the absolute error associated with the moving interface is suppressed to an exceptionally low magnitude of $10^{-11}$. Taken together, these findings rigorously validate the capability and precision of the PCELM method in solving the Stefan problem.
	\par In the following subsection, we will apply our method for solving the two-dimensional one-phase Stefan problem.
	
	\begin{figure}[p] 
		\centering
		\begin{subfigure}[b]{0.42\textwidth}
			\centering
			\includegraphics[width=\textwidth]{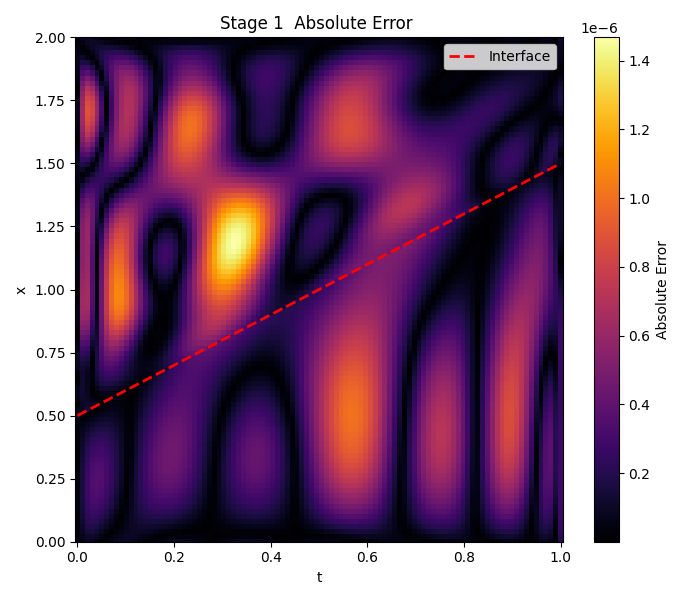} 
			\caption{}
			\label{fig:sub27}
		\end{subfigure}
		\hfill
		\begin{subfigure}[b]{0.42\textwidth}
			\centering
			\includegraphics[width=\textwidth]{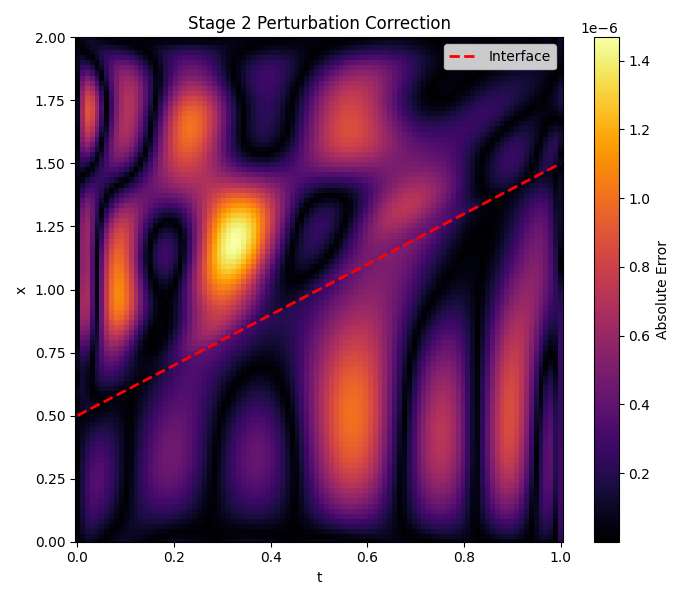} 
			\caption{}
			\label{fig:sub28}
		\end{subfigure}
		\begin{subfigure}[b]{0.42\textwidth}
			\centering
			\includegraphics[width=\textwidth]{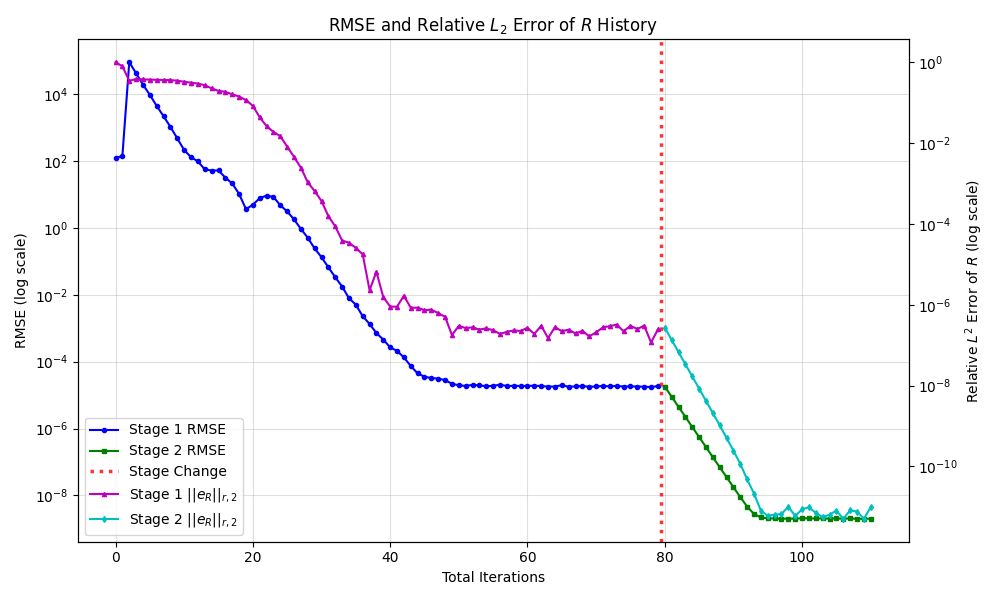}
			\caption{}
			\label{fig:sub21}
		\end{subfigure}
		\hfill
		\begin{subfigure}[b]{0.42\textwidth}
			\centering
			\includegraphics[width=\textwidth]{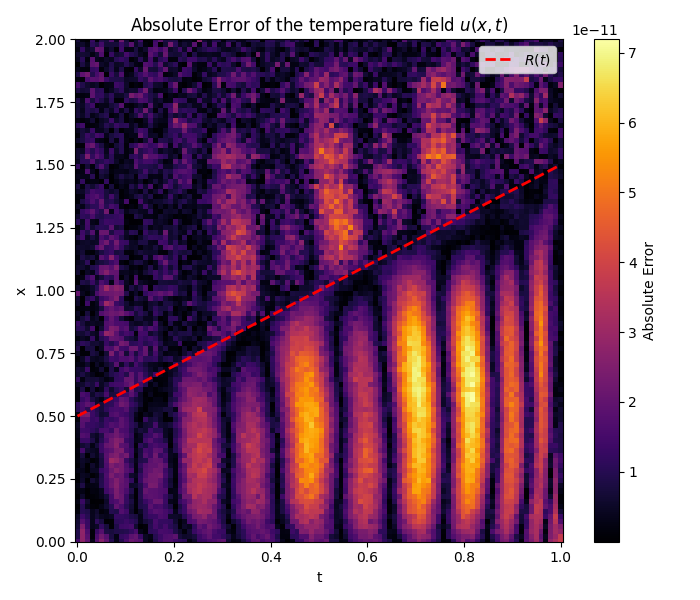}
			\caption{}
			\label{fig:sub22}
		\end{subfigure}

		\begin{subfigure}[b]{0.42\textwidth}
			\centering
			\includegraphics[width=\textwidth]{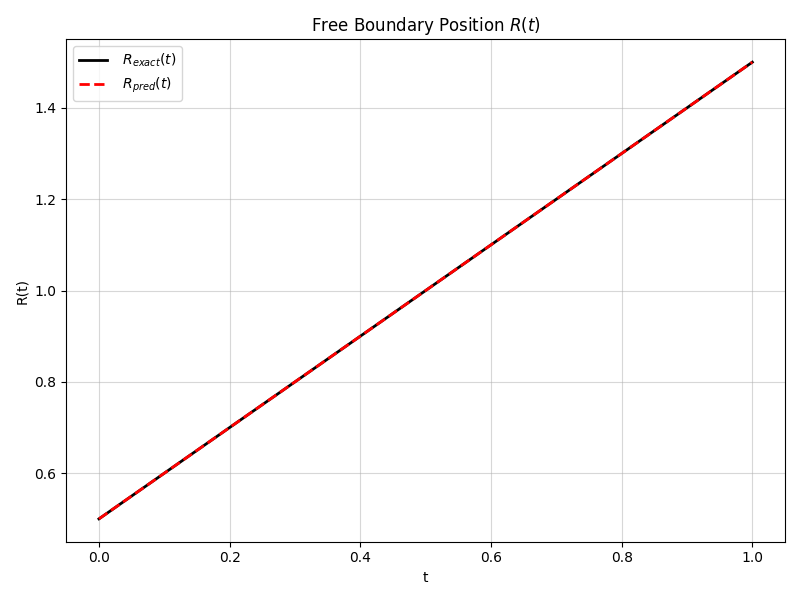} 
			\caption{}
			\label{fig:sub25}
		\end{subfigure}
		\hfill
		\begin{subfigure}[b]{0.42\textwidth}
			\centering
			\includegraphics[width=\textwidth]{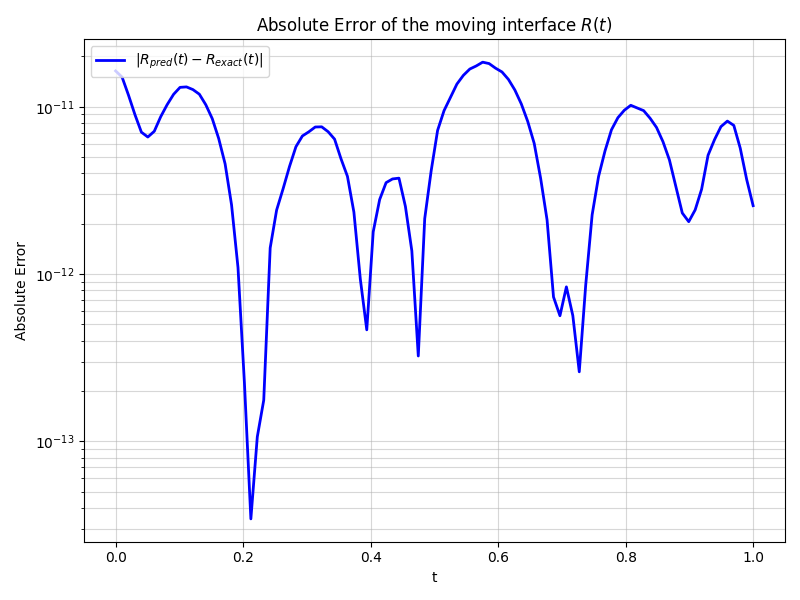} 
			\caption{}
			\label{fig:sub26}
		\end{subfigure}
		\caption{Numerical results for the one-dimensional two-phase Stefan problem: 
			(a) Absolute error distribution of the phase in Stage 1;
			(b) Visualization of the phase in the perturbation correction;
			(c) Evolution of the residual norm and Relative $L_{2}$ Error of the free boundary in the training; 
			(d) Point-wise absolute error of the temperature field; 
			(e) Free boundary position comparison;
			(f) Evolution of the absolute error of the moving interface.}
		\label{fig:2}
	\end{figure}
	
	\subsection{Two-dimensional one-phase Stefan problem}\label{subsec3}
	To evaluate the performance of the proposed approach, we next study a two-dimensional Stefan problem with a moving boundary; related formulations can also be found in \cite{24, FP, DR}. The governing equation for the temperature variable is written as
	\begin{equation}
		\frac{\partial u}{\partial t} - \frac{\partial^2 u}{\partial x^2} - \frac{\partial^2 u}{\partial y^2} = 0,\ 0 \leq x \leq R(y,t),\ 0 \leq y \leq 1,\ 0 \leq t \leq 1,
	\end{equation}
	
	\begin{equation}
		u(x,y,0) = e^{-x+\frac{1}{2}y+\frac{1}{2}} - 1,\ 0 \leq x \leq R(y,0),\ 0 \leq y \leq 1,
	\end{equation}
	
	\begin{equation}
		u(0,y,t) = e^{\frac{5}{4}t+\frac{1}{2}y+\frac{1}{2}} - 1,\ 0 \leq y \leq 1,\ 0 \leq t \leq 1,
	\end{equation}
	
	\begin{equation}
		u(x,0,t) = e^{\frac{5}{4}t-x+\frac{1}{2}} - 1,\ 0 \leq x \leq R(0,t),\ 0 \leq t \leq 1,
	\end{equation}
	
	\begin{equation}
		u(x,1,t) = e^{\frac{5}{4}t-x+1} - 1,\ 0 \leq x \leq R(1,t),\ 0 \leq t \leq 1.
	\end{equation}
	
	On the moving interface $R(y,t)$, the following conditions are imposed:
	\begin{equation}
		R(y,0) = \frac{1}{2}y + \frac{1}{2},\ 0 \leq y \leq 1,\ u(R,y,t) = 0,\ 0 \leq y \leq 1,\ 0 \leq t \leq 1,
	\end{equation}
	
	\begin{equation}
		\frac{\partial u(R,y,t)}{\partial x} - \frac{\partial u(R,y,t)}{\partial y} \frac{\partial R(y,t)}{\partial y} + \frac{\partial R(y,t)}{\partial t} = 0,\ 0 \leq y \leq 1,\ 0 \leq t \leq 1.
	\end{equation}
	
	The analytical solution of this example is given by
	\begin{equation}
		u(x,y,t) = e^{\frac{5}{4}t-x+\frac{1}{2}y+\frac{1}{2}} - 1,\ 0 \leq x \leq R(y,t),
	\end{equation}
	
	\begin{equation}
		R(y,t) = \frac{5}{4}t + \frac{1}{2}y + \frac{1}{2}.
	\end{equation}
	
	\par A collection of numerical results produced by the PCELM method is displayed in Figure \ref{fig:3}. The absolute errors of the temperature field at $t=0.2$, $t=0.5$, and $t=0.8$ are shown in Figures \ref{fig:sub35}, \ref{fig:sub36}, and \ref{fig:sub37}, respectively. In all three cases, the errors remain at the level of $10^{-11}$, and the computed free boundary agrees closely with the exact one. In addition, Figure \ref{fig:sub34} depicts the evolution of the residual norm, from which one can observe a further reduction during the correction stage. The time history of the relative $L_{2}$ error for the temperature field is presented in Figure \ref{fig:sub38}. Figure \ref{fig:sub33} displays the three-dimensional distribution of the absolute error associated with the free boundary. Figures \ref{fig:sub39} and \ref{fig:sub310} provide the absolute free-boundary error before perturbation correction and the corresponding result after correction, respectively, indicating the effectiveness of the proposed PCELM approach. These observations demonstrate that the PCELM method is effective for the numerical treatment of Stefan problems. Overall, the computed results indicate that the proposed approach attains an accuracy of approximately $10^{-11}$ for this multi-dimensional example.
	
	\par Table \ref{tab:errors} lists a quantitative comparison of the relative $L_{2}$ errors obtained by PCELM, PINN, and PIELM. It can be seen that PCELM consistently delivers the smallest errors among the three methods, improving the accuracy by approximately 4--6 orders of magnitude for both the temperature field and the free-boundary location. This comparison further verifies the robustness and computational effectiveness of PCELM in capturing the dynamics of Stefan-type problems.
	\par In the next subsection, a classic Frank-sphere problem will be considered.
	\begin{figure}[p] 
		\centering
		\begin{subfigure}[b]{0.3\textwidth}
			\centering
			\includegraphics[width=\textwidth]{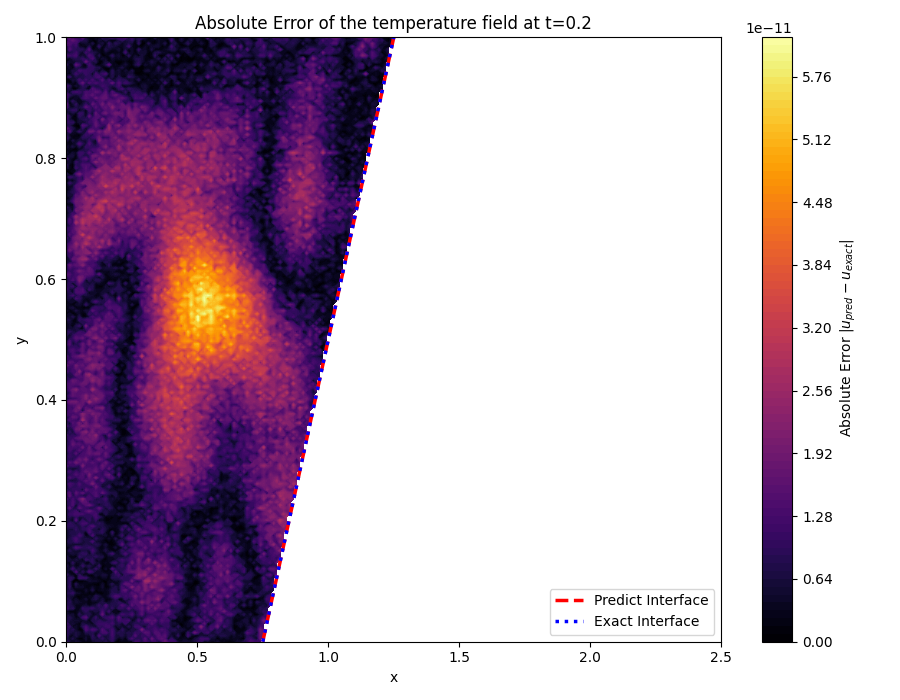} 
			\caption{}
			\label{fig:sub35}
		\end{subfigure}
		\hfill
		\begin{subfigure}[b]{0.3\textwidth}
			\centering
			\includegraphics[width=\textwidth]{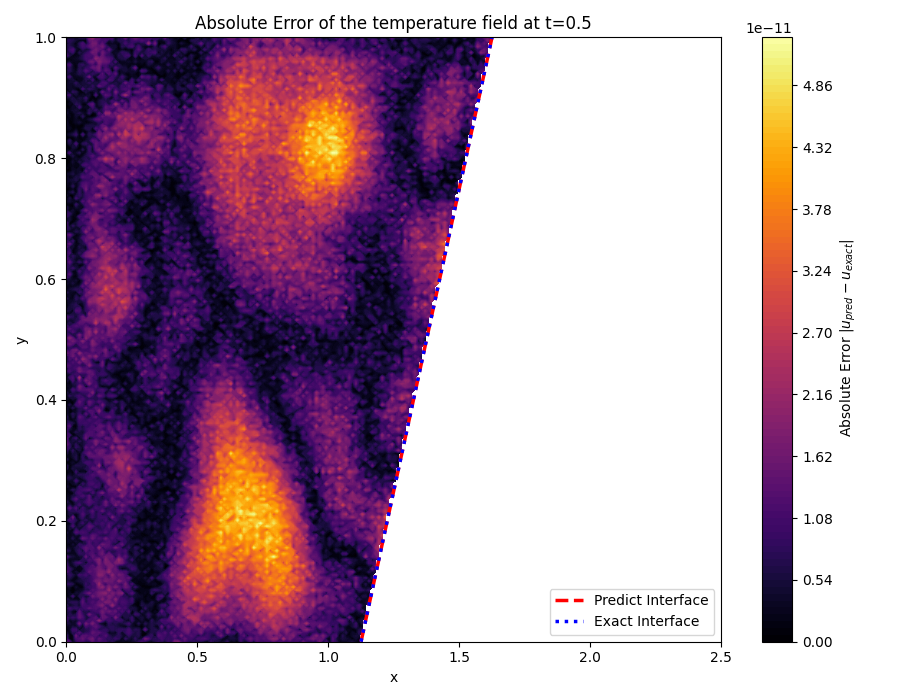} 
			\caption{}
			\label{fig:sub36}
		\end{subfigure}
		\hfill
		\begin{subfigure}[b]{0.3\textwidth}
			\centering
			\includegraphics[width=\textwidth]{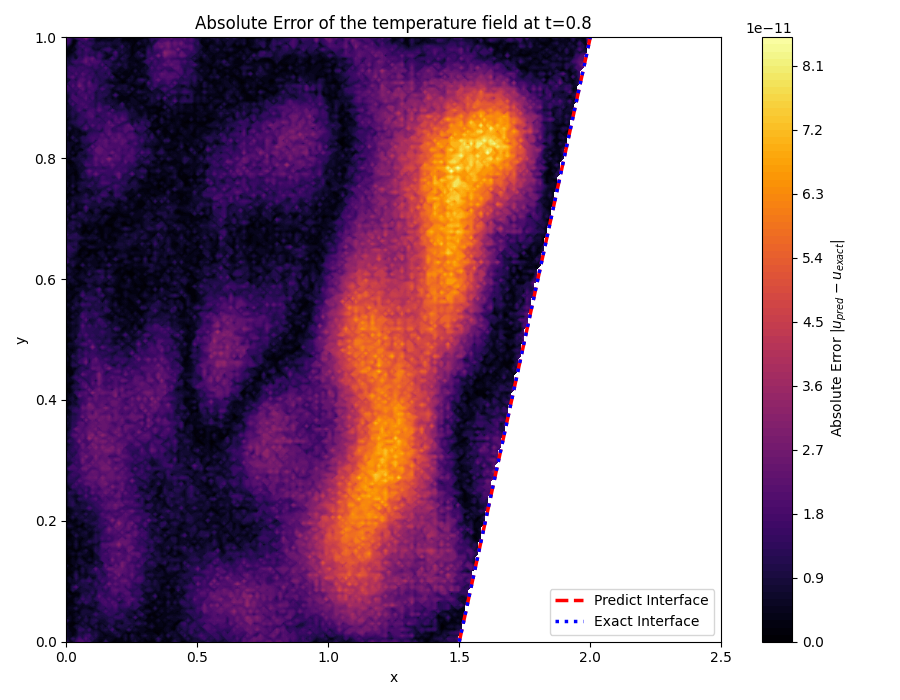} 
			\caption{}
			\label{fig:sub37}
		\end{subfigure}
		\begin{subfigure}[b]{0.3\textwidth}
			\centering
			\includegraphics[width=\textwidth]{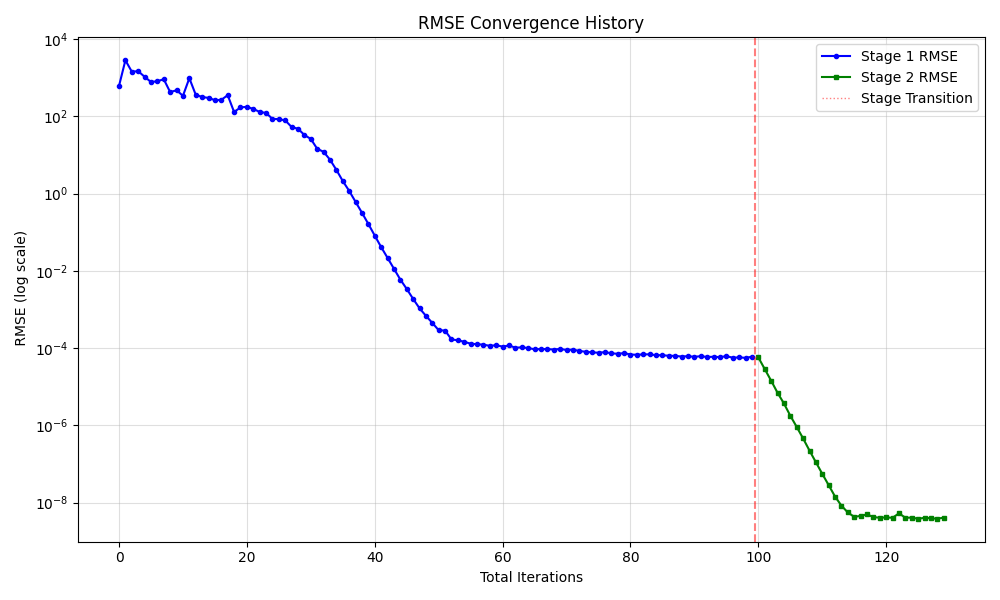}
			\caption{}
			\label{fig:sub34}
		\end{subfigure}
		\hfill
		\begin{subfigure}[b]{0.3\textwidth}
			\centering
			\includegraphics[width=\textwidth]{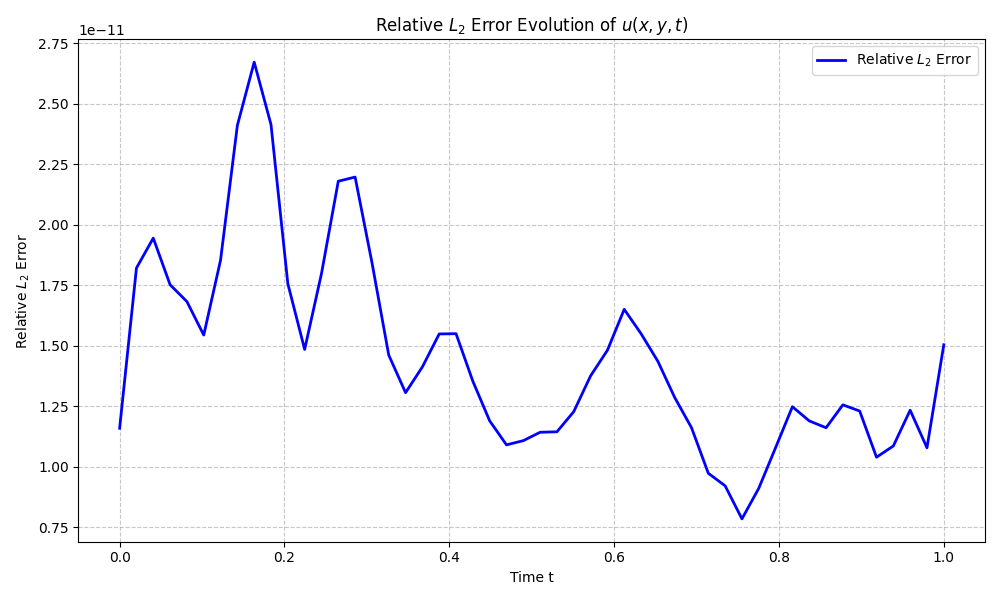}
			\caption{}
			\label{fig:sub38}
		\end{subfigure}
		\hfill
		\begin{subfigure}[b]{0.3\textwidth}
			\centering
			\includegraphics[width=\textwidth]{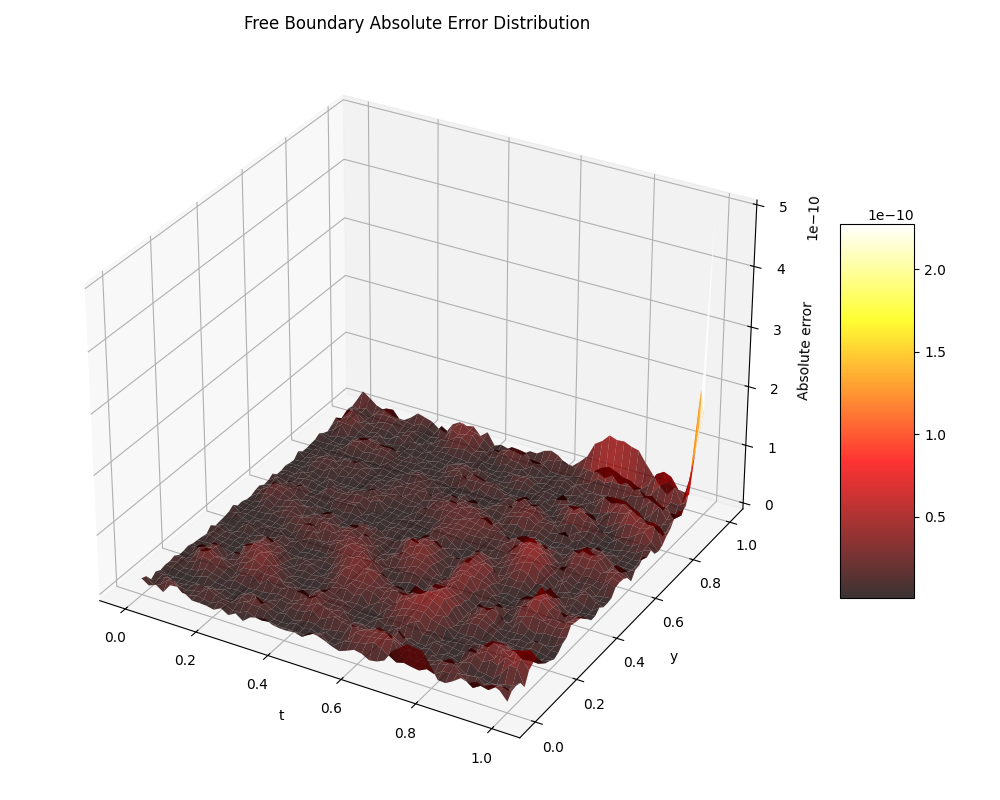}
			\caption{}
			\label{fig:sub33}
		\end{subfigure}
		\begin{subfigure}[b]{0.48\textwidth}
			\centering
			\includegraphics[width=\textwidth]{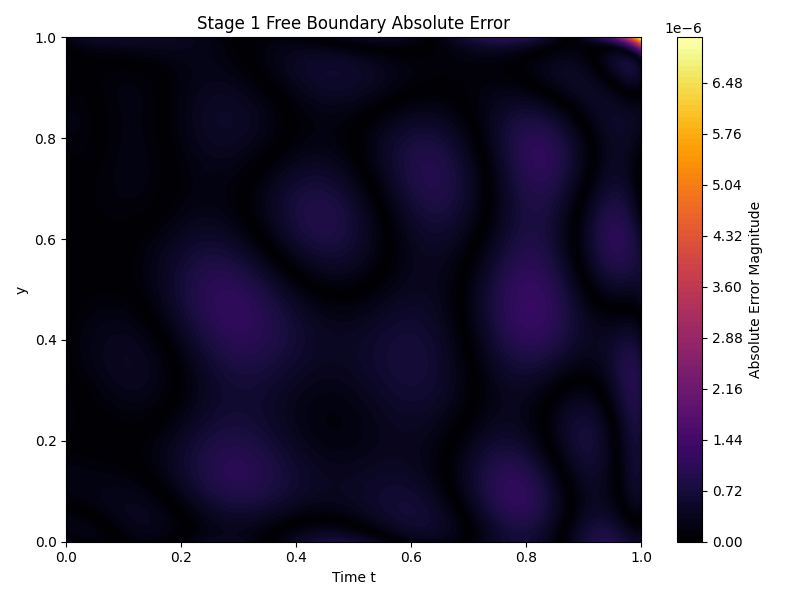} 
			\caption{}
			\label{fig:sub39}
		\end{subfigure}
		\hfill
		\begin{subfigure}[b]{0.48\textwidth}
			\centering
			\includegraphics[width=\textwidth]{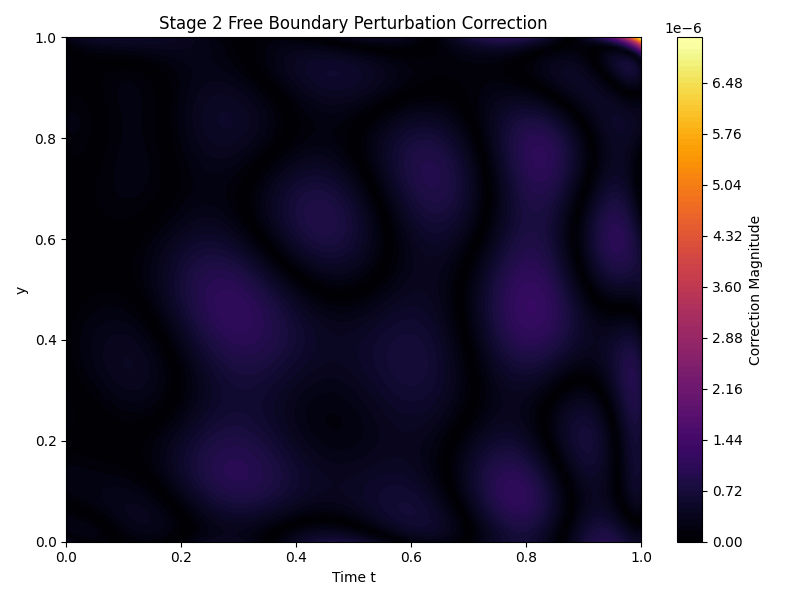} 
			\caption{}
			\label{fig:sub310}
		\end{subfigure}

		\caption{Numerical results for the two-dimensional one-phase Stefan problem: 
			(a) Point-wise absolute error of the temperature field at $t=0.2$; 
			(b) Point-wise absolute error of the temperature field at $t=0.5$;
			(c) Point-wise absolute error of the temperature field at $t=0.8$; 
			(d) RMSE iteration history in the training; 
			(e) Relative $L_{2}$ error evolution of the phase;
			(f) Free boundary absolute error distribution;
			(g) Absolute error distribution of the free boundary in Stage 1;
			(h) Visualization of the free boundary in the perturbation correction.}
		\label{fig:3}
	\end{figure}
	
	\begin{table}[t]
		\centering
		\caption{Comparison of the relative $L_{2}$ errors for different methods (PINN, PIELM and PCELM).}\label{tab:errors}
		
		\small  
		\setlength{\tabcolsep}{1.2mm} 
		
		\begin{tabular}{l c c c c c c}
			\hline
			\multirow{2}{*}{Case} & \multicolumn{2}{c}{PINN \cite{24}} & \multicolumn{2}{c}{PIELM \cite{FP}} & \multicolumn{2}{c}{PCELM} \\
			\cline{2-7}
			& $||\mathrm{e}_{u}||_{r,2}$ & $||\mathrm{e}_{R}||_{r,2}$ & $||\mathrm{e}_{u}||_{r,2}$ & $||\mathrm{e}_{R}||_{r,2}$ & $||\mathrm{e}_{u}||_{r,2}$ & $||\mathrm{e}_{R}||_{r,2}$ \\
			\hline
			Example 5.1 & 3.92$\mathrm{E}{-04}$ & 1.04$\mathrm{E}{-03}$ & 3.44$\mathrm{E}{-08}$ & 7.39$\mathrm{E}{-07}$ & 8.11$\mathrm{E}{-13}$ & 9.93$\mathrm{E}{-13}$ \\
			Example 5.2 & 7.34$\mathrm{E}{-04}$ & 3.52$\mathrm{E}{-04}$ & 1.04$\mathrm{E}{-07}$ & 4.26$\mathrm{E}{-08}$ & 3.69$\mathrm{E}{-11}$ & 9.82$\mathrm{E}{-12}$ \\
			Example 5.3 & 3.39$\mathrm{E}{-04}$ & 4.14$\mathrm{E}{-05}$ & 3.19$\mathrm{E}{-06}$ & 8.15$\mathrm{E}{-07}$ & 1.43$\mathrm{E}{-11}$ & 1.29$\mathrm{E}{-11}$ \\
			\hline
		\end{tabular}
	\end{table}
	\subsection{Frank-sphere problem}
	To validate the robustness of the PCELM method, we address the classic Frank-sphere problem in this subsection, considering both its two-dimensional and three-dimensional formulations. For comparison with other up-to-date methods \cite{LF}, we opt to monitor a specific scenario: an initial liquid-phase disk (devoid of solid phase) that expands into a supercooled liquid phase.
	\subsubsection{Two-dimensional Frank-sphere problem}
	The governing equation for the two-dimensional Frank sphere problem, together with the prescribed initial and boundary conditions, is formulated as follows:
	\begin{equation}
		\frac{\partial u(\boldsymbol{x},t)}{\partial t}= \Delta u(\boldsymbol{x},t), (\boldsymbol{x},t)\in\Omega_{F}\times(t_{0},t_{1}],
	\end{equation}
	\begin{equation}
		u(\boldsymbol{x},t_{0})=u_{exact}(\boldsymbol{x},t_{0}),\boldsymbol{x}\in\Omega_{F},u(\boldsymbol{x},t)=u_{exact}(\boldsymbol{x},t),(\boldsymbol{x},t)\in\partial\Omega_{F}\times(t_{0},t_{1}].
	\end{equation}
	The equations for the moving boundary are given by:
	\begin{equation}
		R(t_{0})=0.5, u(R(t),t)=0, \nabla u(\boldsymbol{x},t)\cdot \boldsymbol{n}+ \frac{dR(t)}{dt}=0, t\in(t_{0},t_{1}].
	\end{equation}
	The exact solution is defined as follows:
	\begin{equation*}
		u(r,t)=u(s)=
		\begin{cases}
			u_{inf}\left(1-\frac{G(s)}{G(s_{0})} \right) & \text{for } s\geq s_{0} \\
			0 & \text{for } s\leq s_{0},
		\end{cases}
	\end{equation*}
	where $r$ denotes the distance to the center of the disk, with the similarity variable defined as $s=r/\sqrt{t}$ and $G(s)=F_{1}(s^{2}/4)$. Here, $F_{1}(z)=\int_{z}^{\infty}\frac{\mathrm{e}^{-t}}{t}\mathrm{d}t$ denotes the exponential integral, and $G(s)$ serves as the similarity solution to the heat equation. The term $u_{inf}$ signifies the far-field temperature at an infinite distance from the moving interface $R(t)$. The value of $u_{inf}$ is denoted as 
	\begin{equation*}
		u_{inf}=\frac{s_{0}G(s_{0})}{2G'(s_{0})}.
	\end{equation*}
	\par For the two dimensional Frank-sphere problem, the spatial domain is defined in $\Omega_{F}=[-1,+1]^{2}\setminus B(\mathbf{0}, R(t))$, where $B(\mathbf{0}, R(t))$ denotes a circular disk of radius $R(t)$ centered at the origin. The time interval is set to $t\in (0.25,0.875]$ and $s_{0}=0.5$. In our numerical implementation, we express the solution in the liquid phase through a basis expansion of the form:
	\begin{equation*}
		u^N(r,t)=\sum_{j=1}^{N} \alpha_j \sigma(\mathbf{w}_{1,j} \cdot (r,t) + b_{1,j}),
	\end{equation*}
	where $r=\sqrt{x^{2}+y^{2}}$ denotes the radial coordinate. The discrete moving boundary is denoted by
	\begin{equation*}
		R^M(t)=\sum_{i=1}^{M}\gamma_{i}\sigma(\mathbf{w}_{2,i} \cdot (\theta,t) + b_{2,i}).
	\end{equation*}
	\par Figure \ref{fig:4} presents a series of visualizations for the two-dimensional Frank-sphere problem. The first row shows the absolute error distribution in the liquid phase at \(t=0.4\), \(0.6\), and \(0.875\), respectively, where the error remains on the order of \(10^{-10}\). The second row highlights the convergence efficiency through the residual norm evolution. Furthermore, the spatiotemporal evolution of the free boundary agrees precisely with the exact solution. Finally, we compare the absolute error distribution before and after the perturbation correction. 
	\par Table \ref{tab:2} presents a comparative analysis of the $L_\infty$ errors for the liquid phase across three different computational approaches: a level-set method with PINN (LPINN), a standard Numerical method, and the PCELM method. The numerical results indicate that the PCELM method achieves the highest precision with a significantly lower error of $4.07 \times 10^{-9}$. In contrast, the standard Numerical method and the LPINN approach yield errors of $6.98 \times 10^{-4}$ and $3.92 \times 10^{-4}$ respectively. These findings clearly demonstrate that the PCELM method provides a more accurate approximation for the liquid phase compared to the other two techniques.
	\begin{figure}[p] 
		\centering
		\begin{subfigure}[b]{0.3\textwidth}
			\centering
			\includegraphics[width=\textwidth]{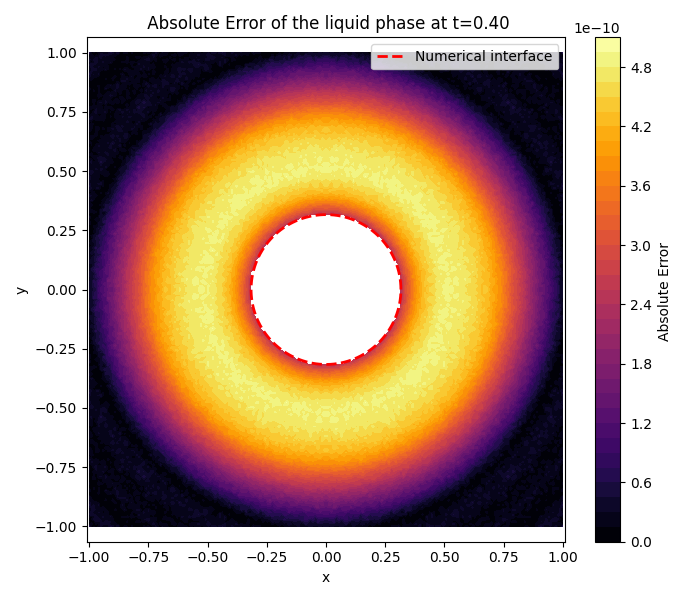} 
			\caption{}
			\label{fig:sub42}
		\end{subfigure}
		\hfill
		\begin{subfigure}[b]{0.3\textwidth}
			\centering
			\includegraphics[width=\textwidth]{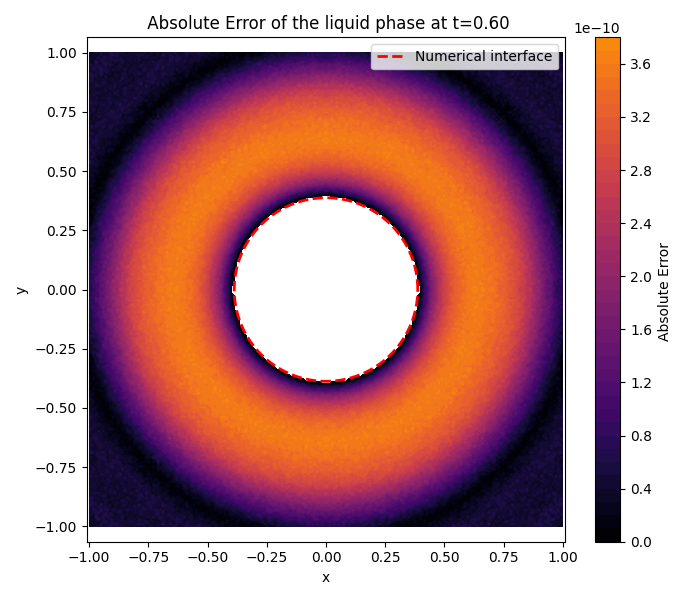} 
			\caption{}
			\label{fig:sub43}
		\end{subfigure}
		\hfill
		\begin{subfigure}[b]{0.3\textwidth}
			\centering
			\includegraphics[width=\textwidth]{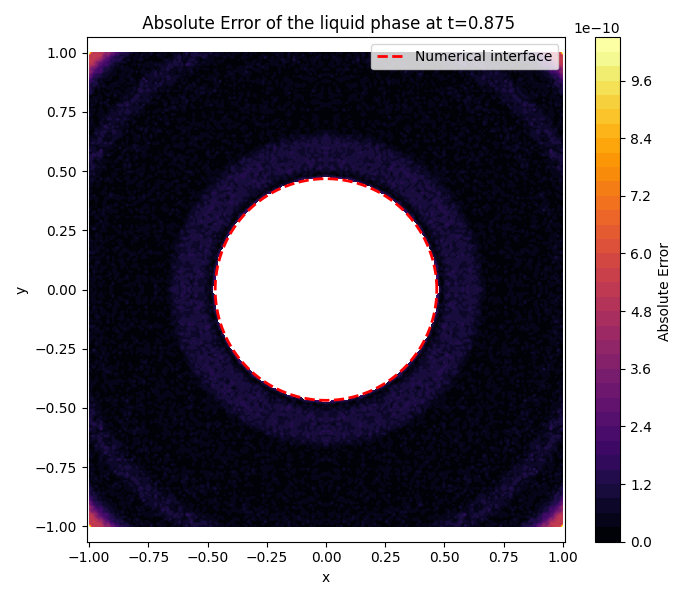} 
			\caption{}
			\label{fig:sub44}
		\end{subfigure}
		\begin{subfigure}[b]{0.48\textwidth}
			\centering
			\includegraphics[width=\textwidth]{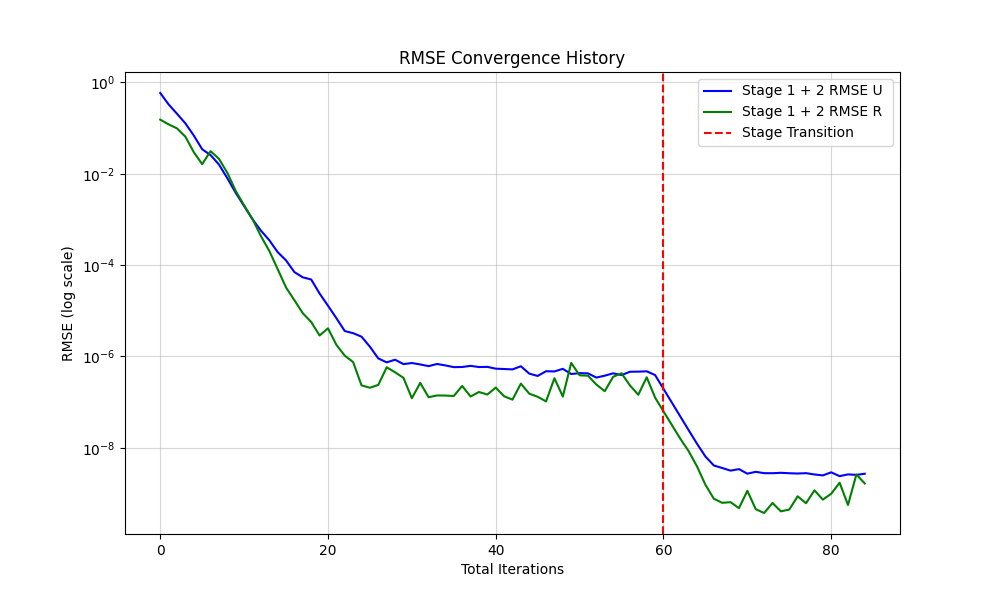}
			\caption{}
			\label{fig:sub41}
		\end{subfigure}
		\hfill
		\begin{subfigure}[b]{0.48\textwidth}
			\centering
			\includegraphics[width=\textwidth]{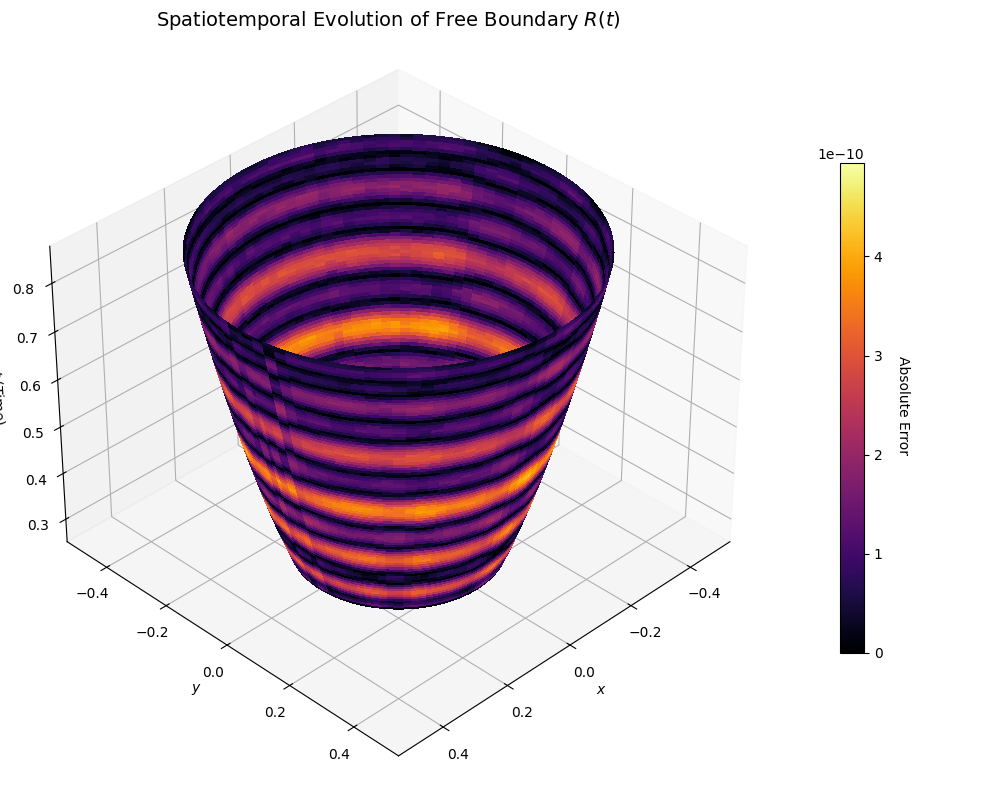}
			\caption{}
			\label{fig:sub47}
		\end{subfigure}
		\begin{subfigure}[b]{0.48\textwidth}
			\centering
			\includegraphics[width=\textwidth]{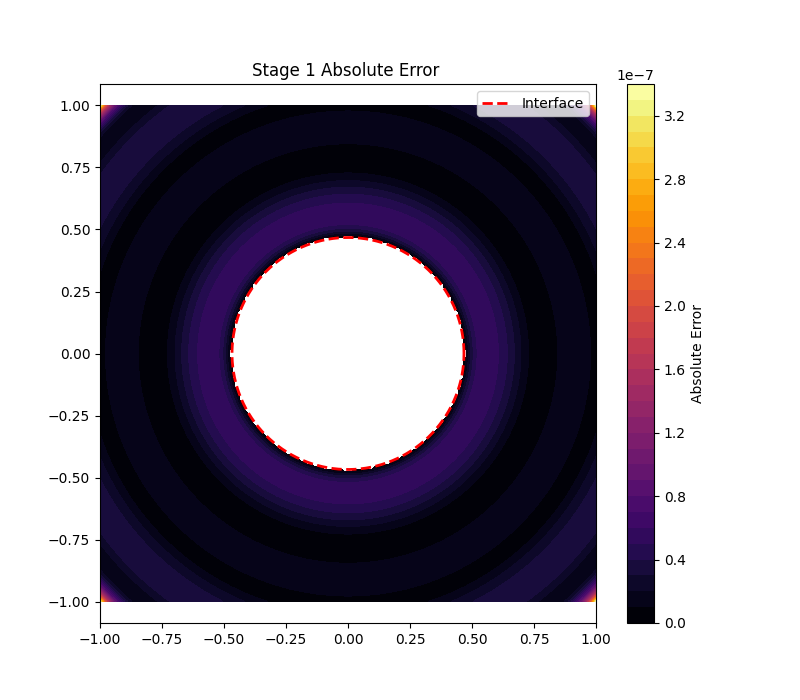} 
			\caption{}
			\label{fig:sub45}
		\end{subfigure}
		\hfill
		\begin{subfigure}[b]{0.48\textwidth}
			\centering
			\includegraphics[width=\textwidth]{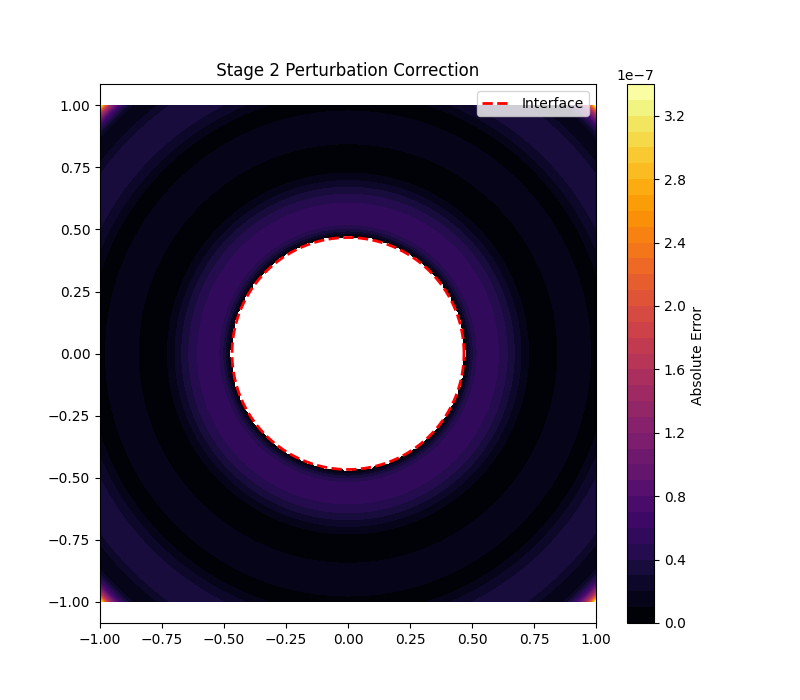} 
			\caption{}
			\label{fig:sub46}
		\end{subfigure}

		\caption{Numerical results for the two-dimensional Frank-sphere problem: 
			(a) Point-wise absolute error of the temperature field at $t=0.2$; 
			(b) Point-wise absolute error of the temperature field at $t=0.4$;
			(c) Point-wise absolute error of the temperature field at $t=0.875$; 
			(d) RMSE iteration history in the training; 
			(e) Spatiotemporal evolution of the free boundary;
			(f) Visualization of the liquid phase in Stage 1;
			(g) Visualization of the liquid phase in the perturbation correction.}
		\label{fig:4}
	\end{figure}
	
	\begin{table}[t]
		\caption{Comparison of $L_{\infty}$ errors for different methods (LPINN \cite{LF}, Numerical and PCELM).}\label{tab:2}
		\centering
		\begin{tabular}{l c }
			\hline
			\multirow{1}{*}{Method} & \multicolumn{1}{c}{$||\mathrm{e}_{u}||_{\infty}$}  \\
			\hline
			LPINN &  4.23$\mathrm{E}{-04}$ \\
			Numerical  & 6.98$\mathrm{E}{-04}$  \\
			PCELM & 4.07$\mathrm{E}{-09}$  \\
			\hline
		\end{tabular}
		
	\end{table}
	\subsubsection{Three-dimensional Frank-sphere problem}
	\par The provided computational framework solves the classical three-dimensional Frank-sphere problem, which is a well-known Stefan problem describing the continuous growth of a spherical solid phase into a surrounding undercooled liquid. Let $\Omega=[-1, +1]^3$ represent the truncated computational domain, and let $R(t)$ denote the moving liquid-solid interface at time $t$. The liquid phase domain is thus defined as $\Omega_L(t) = \Omega \setminus \{ \sqrt{x^2+y^2+z^2} \leq B(\mathbf{0}, R(t)) \}$, where $B(\mathbf{0}, R(t))$ is a ball of radius $R(t)$ centered at the origin. 
	\par With the assumption of a unit thermal diffusivity ($\kappa = 1$), the governing non-dimensional heat conduction equation and the corresponding moving boundary conditions can be mathematically formulated as follows:
	
	\begin{equation}
		\label{eq:frank_sphere_3d}
		\left\{
		\begin{aligned}
			&\frac{\partial u}{\partial t} = \Delta u, \quad &&\text{in} \ \Omega_L(t), \ t \in (T_{\text{start}}, T_{\text{end}}], \\
			&u(\mathbf{x}, t) = 0, \quad &&\text{on} \ R(t) \text{ (Isolated condition)}, \\
			&\frac{\partial R}{\partial t} + \frac{\partial u}{\partial \mathbf{n}} = 0, \quad &&\text{on} \ R(t) \text{ (Stefan Condition)}, \\
			&u(\mathbf{x}, t) = u_{\text{exact}}(\mathbf{x}, t), \quad &&\text{on} \ \partial\Omega, \ t \in (T_{\text{start}}, T_{\text{end}}], \\
			&u(\mathbf{x}, T_{\text{start}}) = u_{\text{exact}}(\mathbf{x}, T_{\text{start}}), \quad &&\text{in} \ \Omega_L(T_{\text{start}}), \\
			&R(T_{\text{start}}) = R_0 \sqrt{T_{\text{start}}}, \quad && \text{(Initial Interface Radius)},
		\end{aligned}
		\right.
	\end{equation}
	where $u(\mathbf{x}, t)$ denotes the temperature field of the liquid phase, $\mathbf{n}$ is the unit normal vector at the interface pointing towards the liquid, and $R(t)$ is the dynamic radius of the moving solid boundary. The $T_{\text{start}}$ and $T_{\text{end}}$ are $0.25$ and $1.25$, respectively.
	\par To well-pose the problem within the truncated bounded domain $\Omega$, continuous Dirichlet boundary conditions and initial conditions are enforced utilizing the analytical exact solution $u_{\text{exact}}(\mathbf{x}, t)$. The exact solution of the temperature field exhibits a self-similar nature radially, given by:
	\begin{equation*}
		\label{eq:exact_sol}
		u_{\text{exact}}(r, t) = A \left[ \frac{1}{s} \exp{\left(-\frac{s^2}{4}\right)} - \frac{\sqrt{\pi}}{2} \text{erfc}\left(\frac{s}{2}\right) \right] + T_{\infty}, \quad s = \frac{r}{\sqrt{t}},
	\end{equation*}
	where $r = \|\mathbf{x}\|_2$, $S_0 = 0.5$ is the invariant growth constant, and the coefficients $A = 0.5 S_0^3 \exp(S_0^2/4)$ and the far-field undercooling temperature $T_{\infty}$ are intrinsically determined to simultaneously satisfy the Stefan phase-change condition and the interfacial thermodynamic equilibrium.
	\par As depicted in Figure \ref{fig:5}, we present a comprehensive numerical evaluation of the proposed method applied to the three-dimensional Frank-sphere problem. Subfigures \ref{fig:sub52}, \ref{fig:sub53}, \ref{fig:sub54} illustrate the temporal evolution of the point-wise absolute error in the temperature field on the two-dimensional cross-section ($z=0$) at times $t = 0.6$, $t = 0.9$, and $t = 1.25$, respectively. The red dashed lines precisely track the moving interface, demonstrating that the numerical scheme effectively controls the approximation error within the magnitude of $10^{-7}$ to $10^{-8}$ throughout the spatial domain during the evolution process.
	\par The training dynamics are elucidated in the subfigure \ref{fig:sub51}, which plots the Root Mean Square Error (RMSE) convergence history. Subfigures \ref{fig:sub58} and \ref{fig:sub59} provide a comparative visualization of the liquid phase absolute error distributions between the Stage 1 initial approximation and the Stage 2 perturbation correction. The wave-like oscillatory error patterns indicate the high-frequency components effectively handled by the PCELM method. 
	\par Finally, subfigures \ref{fig:sub55}, \ref{fig:sub56}, \ref{fig:sub57} render the three-dimensional morphological evolution of the expanding spherical free boundary at the corresponding time steps ($t = 0.6, 0.9, 1.25$). The surface color mapping reflects the radial error between the numerical and exact interfaces, which is consistently suppressed to an exceptionally low amplitude. This visually substantiates the high fidelity of the PCELM framework in capturing dynamic, three-dimensional moving boundaries under complex phase-change scenarios.
	\par The coordinate transformation approach for the Frank-sphere problem aligns more closely with physical reality, thereby enabling the attainment of the current level of numerical accuracy. In contrast, radial-direction perturbation correction yields only a marginal improvement no greater than 4 to 6 orders of magnitude in the accuracy of the second-stage optimization, relative to numerical experiments \ref{subsec1}-\ref{subsec3}.
	
	\begin{figure}[p] 
		\centering
		\begin{subfigure}[b]{0.3\textwidth}
			\centering
			\includegraphics[width=\textwidth]{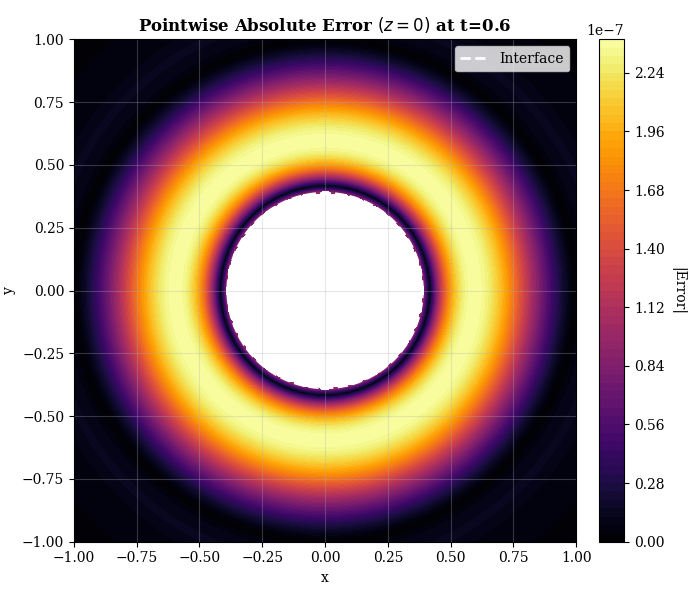} 
			\caption{}
			\label{fig:sub52}
		\end{subfigure}
		\hfill
		\begin{subfigure}[b]{0.3\textwidth}
			\centering
			\includegraphics[width=\textwidth]{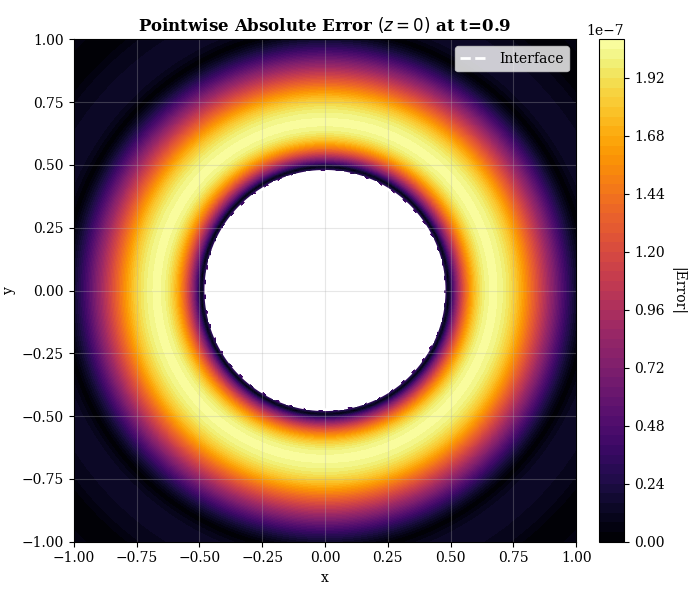} 
			\caption{}
			\label{fig:sub53}
		\end{subfigure}
		\hfill
		\begin{subfigure}[b]{0.3\textwidth}
			\centering
			\includegraphics[width=\textwidth]{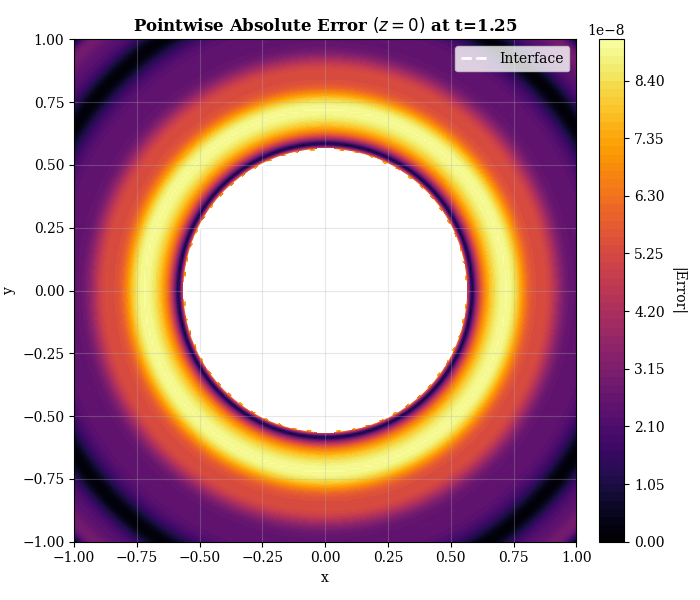} 
			\caption{}
			\label{fig:sub54}
		\end{subfigure}
		\begin{subfigure}[b]{0.3\textwidth}
			\centering
			\includegraphics[width=\textwidth]{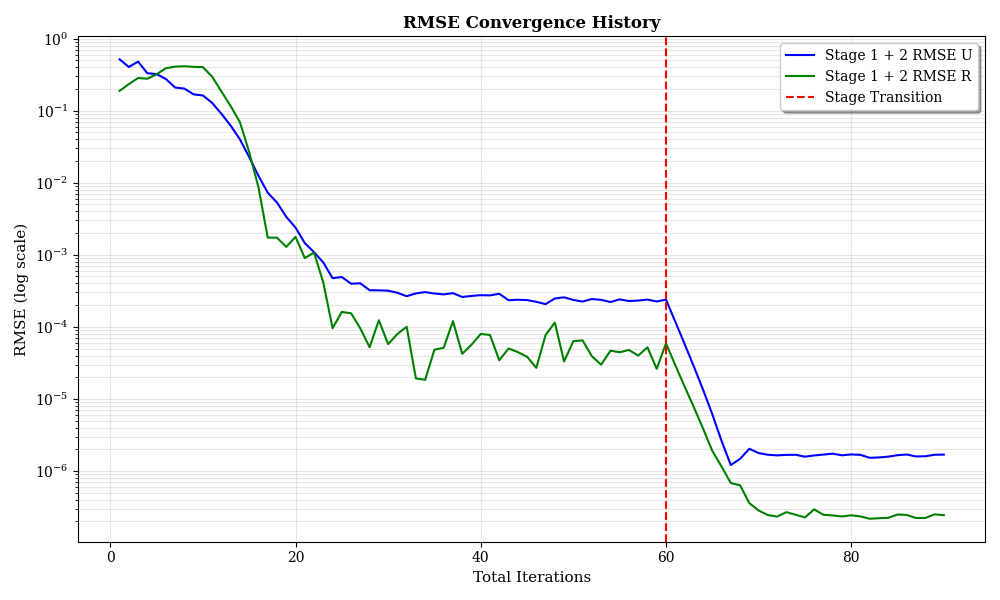}
			\caption{}
			\label{fig:sub51}
		\end{subfigure}
		\hfill
		\begin{subfigure}[b]{0.3\textwidth}
			\centering
			\includegraphics[width=\textwidth]{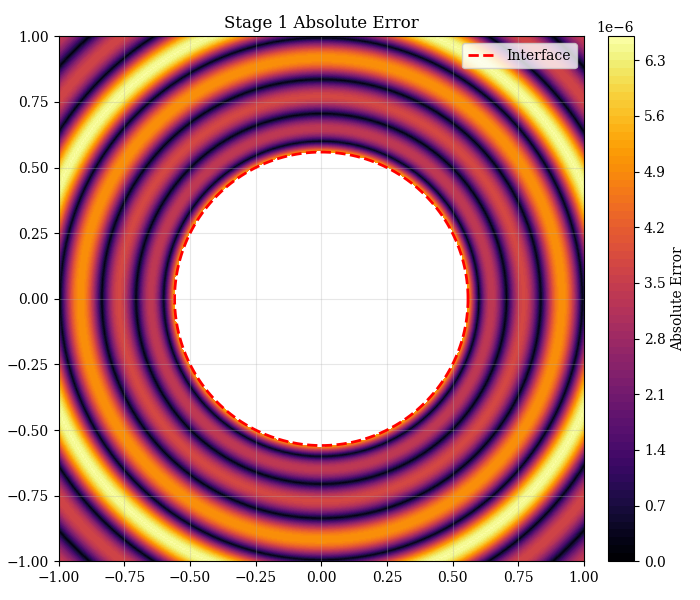}
			\caption{}
			\label{fig:sub58}
		\end{subfigure}
		\hfill
		\begin{subfigure}[b]{0.3\textwidth}
			\centering
			\includegraphics[width=\textwidth]{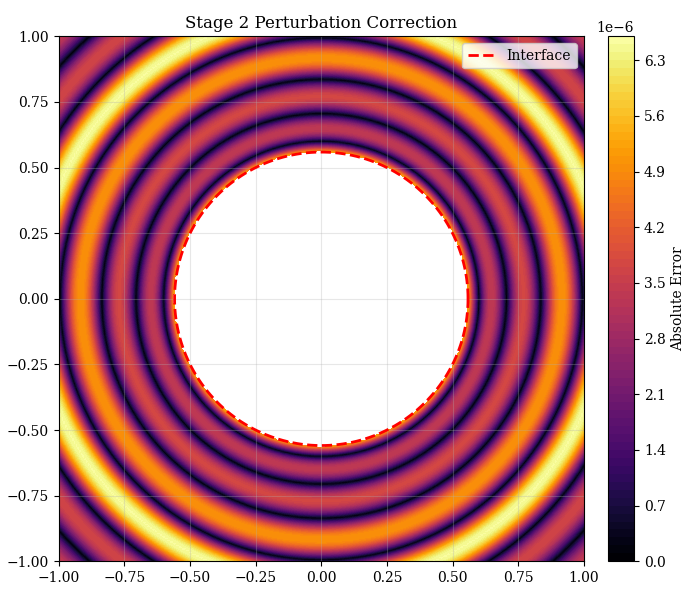}
			\caption{}
			\label{fig:sub59}
		\end{subfigure}
		
		\begin{subfigure}[b]{0.3\textwidth}
			\centering
			\includegraphics[width=1.1\textwidth]{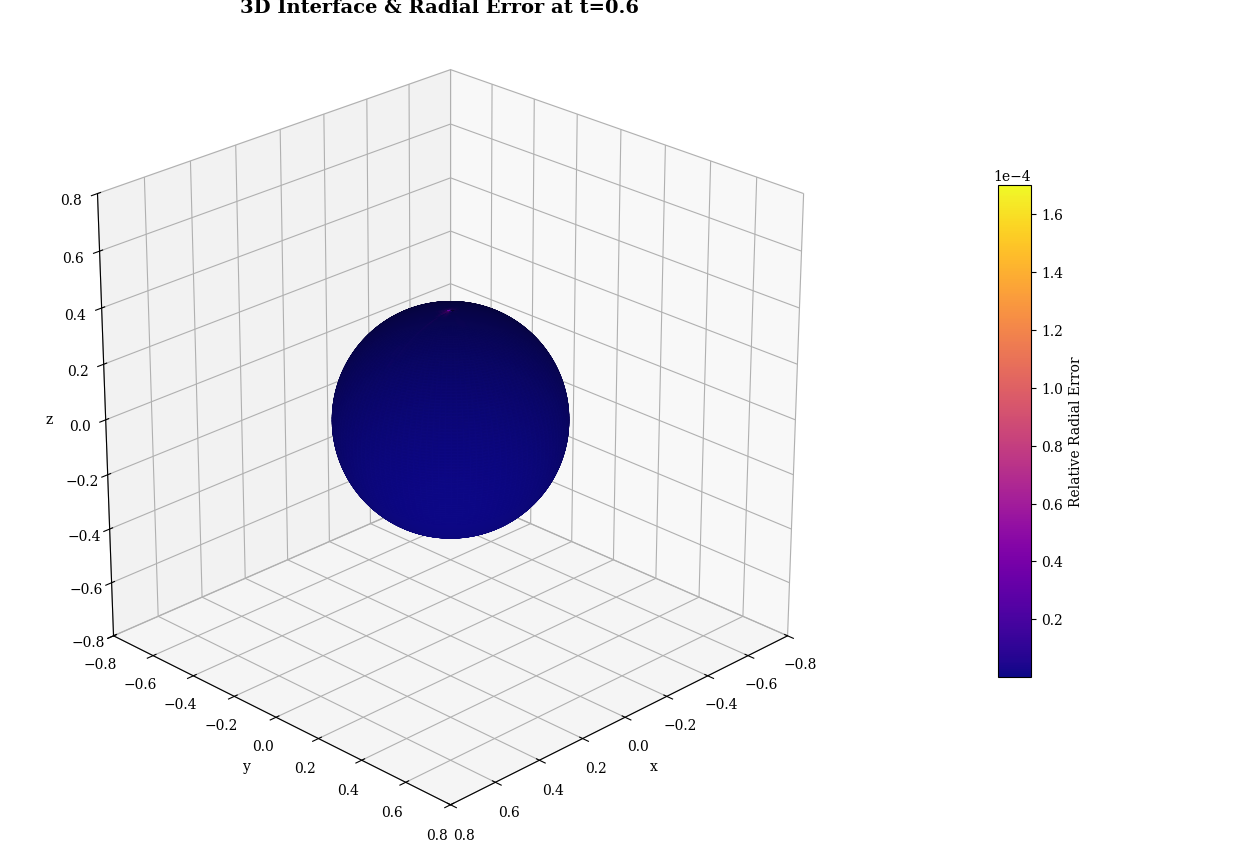} 
			\caption{}
			\label{fig:sub55}
		\end{subfigure}
		\hfill
		\begin{subfigure}[b]{0.3\textwidth}
			\centering
			\includegraphics[width=\textwidth]{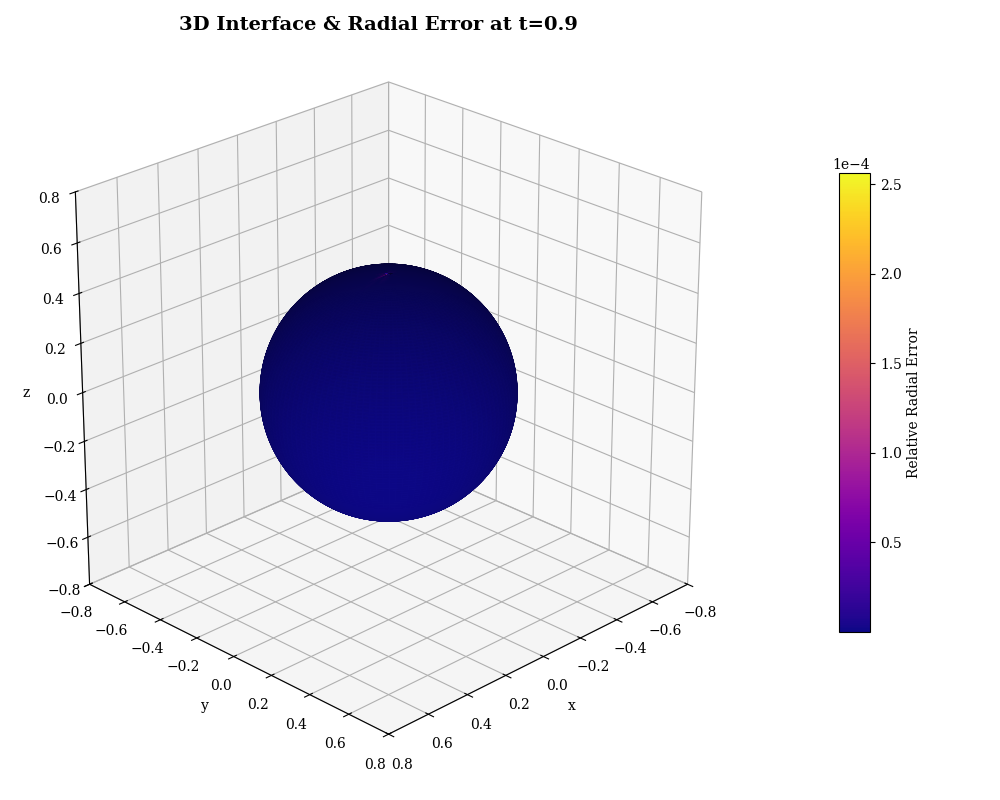} 
			\caption{}
			\label{fig:sub56}
		\end{subfigure}
		\hfill
		\begin{subfigure}[b]{0.3\textwidth}
			\centering
			\includegraphics[width=\textwidth]{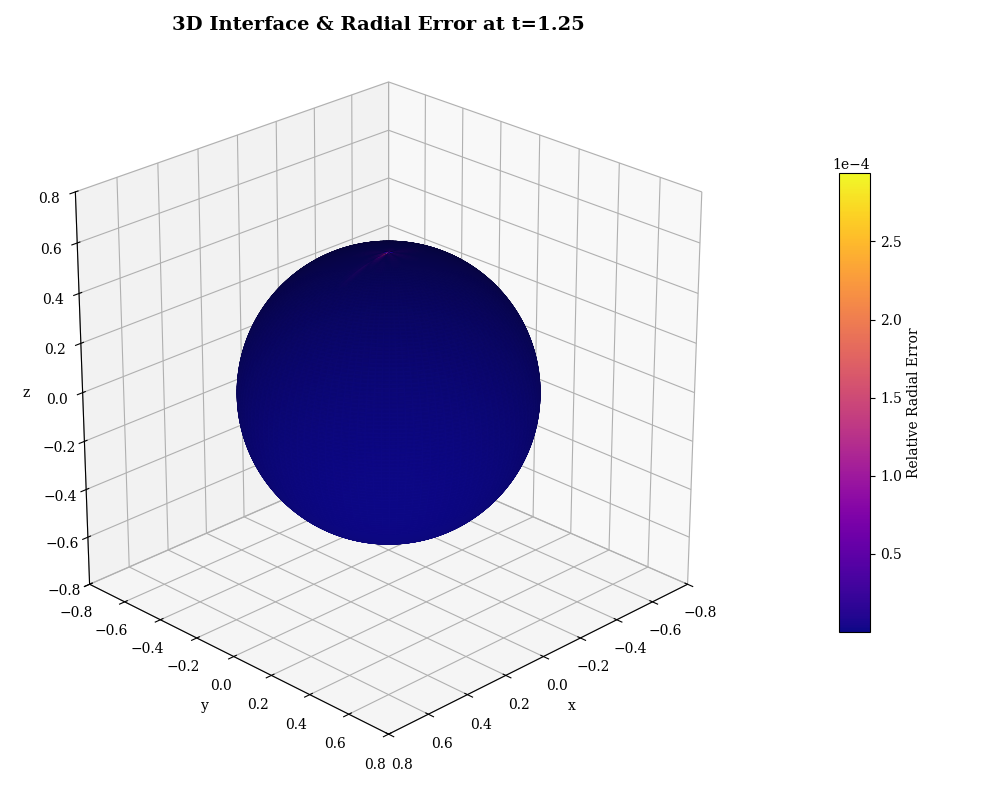}
			\caption{}
			\label{fig:sub57}
		\end{subfigure}

		\caption{Numerical results for the three-dimensional Frank-sphere problem: 
			(a) Point-wise absolute error of the temperature field at $t=0.6$; 
			(b) Point-wise absolute error of the temperature field at $t=0.9$;
			(c) Point-wise absolute error of the temperature field at $t=1.25$; 
			(d) RMSE iteration history in the training; 
			(e) Absolute error distribution of the liquid phase in Stage 1;
			(f) Absolute error of the liquid phase in the perturbation correction;
			(g) Visualization of the free boundary at $t=0.6$;
			(h) Visualization of the free boundary at $t=0.9$;
			(i) Visualization of the free boundary at $t=1.25$.}
		\label{fig:5}
	\end{figure}
	
	\section{Conclusion}
	\par This paper presents a PCELM-based method for the numerical solution of the Stefan problem. Based on the ELM method with a Gauss-Newton iteration for solving the Stefan problem, where the free boundary and temperature fields are simultaneously solved in the neural network, the perturbation correction for the residual from the initial step mostly captures the high frequency in the solution. It is remarkable that the proposed PCELM method improves the 2-6 orders of the magnitude in the numerical experiments. The comparison with other methods also demonstrates the accuracy advantage of our method in solving the Stefan problem.
	\par The universality and stability of this method make it applicable to a wide range of nonlinear partial differential equations. The inherent nonlinearity of these equations leads to non-convex optimization problems. In future research, we plan to extend this perturbation-correction strategy to a broader range of scientific computing fields, although these fields have complex non-convex optimization environments, high-precision solutions are still required. 
	
	\section*{Acknowledgements}
	The work was supported by the National Natural Science Foundation of China under Grant No. 12571431.

\end{document}